\title{ Floer homology and right-veering monodromy}
\author[John A. Baldwin]{John A. Baldwin}
\address{Department of Mathematics \\ Boston College}
\email{john.baldwin@bc.edu}
\author{Yi Ni}
\address{Department of Mathematics\\California Institute of Technology}
\email{yini@caltech.edu}
\author[Steven Sivek]{Steven Sivek}
\address{Department of Mathematics\\Imperial College London}
\email{s.sivek@imperial.ac.uk}
\thanks{JAB was supported by  NSF FRG Grant DMS-1952707, YN was supported by NSF Grant DMS-181190.}
\newtheorem*{rep@theorem}{\rep@title}
\newcommand{\newreptheorem}[2]{%
\newenvironment{rep#1}[1]{%
 \def\rep@title{#2 \ref{##1}}%
 \begin{rep@theorem}}%
 {\end{rep@theorem}}}
\newtheorem {theorem}{Theorem}
\newtheorem {lemma}[theorem]{Lemma}
\newtheorem {proposition}[theorem]{Proposition}
\newtheorem {corollary}[theorem]{Corollary}
\numberwithin{equation}{section}
\numberwithin{theorem}{section}
\theoremstyle{definition}
\newtheorem{remark}[theorem]{Remark}
\newtheorem*{remark*}{Remark}
\newlist{pcases}{enumerate}{1}
\setlist[pcases]{
  label=\bf{Case~\arabic*:}\protect\thiscase.~,
  ref=\arabic*,
  align=left,
  labelsep=0pt,
  leftmargin=0pt,
  labelwidth=0pt,
  parsep=0pt
}
\newcommand{\case}[1][]{%
  \if\relax\detokenize{#1}\relax
    \def\thiscase{}%
  \else
    \def\thiscase{~#1}%
  \fi
  \item
}
\newcommand{\Z}{\mathbb{Z}}
\newcommand{\R}{\mathbb{R}}
\newcommand{\C}{\mathbb{C}}
\newcommand{\F}{\mathbb{F}}
\newcommand{\Q}{\mathbb{Q}}
\newcommand{\spc}{\operatorname{Spin}^c}
\newcommand{\spinc}{\mathfrak{s}}
\newcommand{\spint}{\mathfrak{t}}
\newcommand{\Img}{\operatorname{Im}}
\newcommand{\T}{\mathbb{T}}
\DeclareMathOperator{\Sym}{Sym}
\newcommand\hf{\mathit{HF}}
\newcommand\hfs{\hf^{symp}}
\newcommand\cf{\mathit{CF}}
\newcommand\cfs{\cf^{symp}}
\newcommand\hfk{\mathit{HFK}}
\newcommand\cfk{\mathit{CFK}}
\newcommand\hfkhat{\widehat{\hfk}}
\newcommand\cfkhat{\widehat{\cfk}}
\DeclareFontFamily{U}{mathx}{\hyphenchar\font45}
\DeclareFontShape{U}{mathx}{m}{n}{
      <5> <6> <7> <8> <9> <10>
      <10.95> <12> <14.4> <17.28> <20.74> <24.88>
      mathx10
      }{}
\DeclareSymbolFont{mathx}{U}{mathx}{m}{n}
\DeclareMathAccent{\widecheck}{0}{mathx}{"71}
\newcommand{\hfhat}{\widehat{\mathit{HF}}}
\newcommand{\cfhat}{\widehat{\mathit{CF}}}
\newcommand{\cfkinfty}{\mathit{CFK}^\infty}
\newcommand{\hfp}{\mathit{HF}^+}
\newcommand{\cfp}{\mathit{CF}^+}
\newcommand{\id}{\operatorname{id}}
\newcommand{\cone}{\operatorname{Cone}}
\newcommand{\pt}{\mathrm{pt}}
\newcommand{\mirror}[1]{\overline{#1}}
\tikzset{every picture/.style=thick}
\tikzset{link/.style = { white, double = black, line width = 1.75pt, double distance = 1.25pt, looseness=1.75 }}
\tikzset{crossing/.style = {draw, circle, dotted, minimum size=0.5cm, inner sep=0, outer sep=0}}
\pgfplotsset{compat=1.12}
\begin{document}

\begin{abstract}
We prove that the knot Floer complex of a fibered knot  detects whether the monodromy of its fibration is right-veering. In particular, this leads to a purely  knot Floer-theoretic characterization of tight contact structures, by the work of Honda--Kazez--Mati{\'c}. Our proof makes use of the relationship between the Heegaard Floer homology of mapping tori and the symplectic Floer homology of area-preserving surface diffeomorphisms. We describe applications of this work to Dehn surgeries and taut foliations.
\end{abstract}

\maketitle
\section{Introduction}
\label{sec:intro}

Let $K$ be a fibered knot in a closed 3-manifold $Y$, with fiber $S$ and monodromy  $h:S\to S$. The map $h$ is  said to be \emph{right-veering} if it sends every properly embedded arc in $S$ to the right  (see \S\ref{sec:background} for a precise definition). This dynamical notion is   important in low-dimensional topology  due  to    the following celebrated theorem of Honda--Kazez--Mati{\'c}   \cite{hkm-veering}:

\begin{theorem}\label{thm:hkm}
A contact 3-manifold $(Y,\xi)$ is tight if and only if  every fibered knot $K\subset Y$  supporting $(Y,\xi)$ has right-veering monodromy.\footnote{Technically, Honda--Kazez--Mati{\'c}'s result says that $(Y,\xi)$ is tight iff every fibered \emph{link} supporting $(Y,\xi)$ has right-veering monodromy, but this implies the statement here, as discussed in \S\ref{ssec:nt}.}\end{theorem}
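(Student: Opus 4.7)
The plan is to exploit the Giroux correspondence, which turns the contact-geometric question into a purely topological one about open books, and then translate back and forth between right-veering and (over)twistedness using convex surface theory on the pages. I would handle the two implications separately, since the contrapositive of each is constructive: in one direction we must produce an overtwisted disk from a leftward-veering arc, and in the other we must arrange a given overtwisted disk to sit transversely to the pages of a carefully chosen open book.

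For the forward direction, suppose $(Y,\xi)$ is tight and that some supporting open book $(S,h)$ has a properly embedded arc $\gamma\subset S$ with $h(\gamma)$ lying strictly to the left of $\gamma$ near a common endpoint on $\partial S$. The idea is to build an overtwisted disk inside the mapping torus $T_h$ piece of $Y$. Concretely, consider the annulus in $Y$ swept out by the trace of $\gamma$ as one flows through one period of the open book; using the left-veering data at the boundary, push this into a disk $D$ whose boundary is Legendrian and whose Thurston--Bennequin number computation witnesses an overtwisted disk. This is a standard convex-surface/bypass manipulation: a non-right-veering arc is exactly the combinatorial shadow of a ``bad'' bypass that obstructs tightness, and making this rigorous is the main technical step of this direction.

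For the backward direction, I would argue the contrapositive: if $(Y,\xi)$ is overtwisted, then some fibered link supporting $\xi$ has non-right-veering monodromy. Starting with an overtwisted disk $\Delta\subset Y$, use Giroux's procedure to build an open book compatible with $\xi$ in which $\partial\Delta$ is Legendrian and transverse to the pages, then perform enough positive stabilizations so that $\Delta$ intersects a page $S$ in a controlled collection of arcs. One of these arcs (or a slight modification thereof) will be sent strictly to the left by the monodromy precisely because $\Delta$ is overtwisted; the $\mathit{tb}=0$ condition on $\partial\Delta$ forces the holonomy of $h$ along some arc to fail right-veering. Finally, one passes to the fibered knot version via the footnoted observation in \S\ref{ssec:nt}, reducing the case of fibered links to that of fibered knots by a suitable Murasugi-sum/plumbing or band-connect-sum argument that preserves both the supported contact structure and the right-veering property.

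The main obstacle is the second direction: showing that overtwistedness forces the monodromy of \emph{some} (not every) compatible open book to fail right-veering requires a delicate choice of open book adapted to $\Delta$, and one must verify that the positive stabilizations needed to put $\Delta$ in good position do not secretly repair non-right-veering into right-veering. This is where convex surface theory on the pages, together with the behavior of right-veering under positive stabilization, does the real work, and it is the step I would spend the most care on.
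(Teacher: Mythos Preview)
Your proposal is aimed at the wrong target. You are sketching a proof of the Honda--Kazez--Mati{\'c} theorem itself (the link version, stated in the paper as Theorem~\ref{thm:hkm2}), but the paper does not reprove that result: it is cited from \cite{hkm-veering} as an established theorem. The paper's proof of Theorem~\ref{thm:hkm} is \emph{only} the passage from the link statement to the knot statement---precisely the step you relegate to a single sentence at the end (``reducing the case of fibered links to that of fibered knots by a suitable Murasugi-sum/plumbing or band-connect-sum argument'').

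The paper's actual argument is a short induction on the number of components. The forward direction is trivial (knots are links). For the other direction, assume every supporting fibered \emph{knot} is right-veering; one must show every supporting fibered \emph{link} is right-veering, so that Theorem~\ref{thm:hkm2} applies. If an $(n{+}1)$-component supporting link $(\Sigma,\varphi)$ had an arc $a$ not sent to the right, choose an arc $c\subset\Sigma$ \emph{disjoint from $\varphi(a)$} joining two distinct boundary components, attach a $1$-handle along $\partial c$, and compose with a right-handed Dehn twist along the resulting curve. This positive stabilization yields an $n$-component supporting link whose monodromy still fails to move $a$ to the right (because the Dehn twist is supported away from $\varphi(a)$), contradicting the inductive hypothesis. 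The crucial and only nontrivial point is the choice of $c$ disjoint from $\varphi(a)$; your proposal does not isolate this, and your worry that ``positive stabilizations \dots do not secretly repair non-right-veering into right-veering'' is exactly what this choice of $c$ addresses.

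Your convex-surface/bypass sketch of both directions of HKM is, as written, far too vague to stand on its own (constructing an honest overtwisted disk from a left-veering arc, and conversely extracting a left-veering arc from an overtwisted disk, each take real work in \cite{hkm-veering}), and in any case none of it is needed here.
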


Our goal in this paper is to prove that the knot Floer complex of a fibered knot completely  detects whether its monodromy is right-veering, as described below.\footnote{Among  many other things, this gives a  simple algorithm  to decide whether a diffeomorphism of a surface with connected boundary is right-veering, which can be unwieldy to check from the original definition.}

Recall that a fibered knot $K\subset Y$  gives rise to a filtration of the Heegaard Floer complex of $-Y$. Up to filtered chain homotopy equivalence, this filtration takes the form \[0= \mathscr{F}_{-1-g}\,\subset\,\F\langle\mathbf{c}\rangle = \mathscr{F}_{-g} \,\subset\, \mathscr{F}_{1-g}\, \subset\, \cdots \,\subset \mathscr{F}_{g} = \cfhat(-Y),\] where $g=g(K)$ and we   work with coefficients in $\F = \Z/2\Z$ throughout. In particular, the knot Floer homology groups associated with $K\subset -Y$ are  given by \[\hfkhat(-Y,K,i) \cong H_*(\mathscr{F}_i/\mathscr{F}_{i-1}).\] 
Note that  \[c(\xi) = [\mathbf{c}]\in H_*(\cfhat(-Y))= \hfhat(-Y)\] is the  contact  invariant of the contact manifold $(Y,\xi)$ supported by the fibered knot $K\subset Y$, as defined by Ozsv{\'a}th--Szab{\'o} in  \cite{osz-contact}. If this contact  invariant vanishes (as, for example, when $\xi$ is overtwisted), then the class $[\mathbf{c}]$ vanishes in the homology of some filtration level.
In \cite{baldwin-velavick}, Baldwin--Vela-Vick introduced a numerical invariant $b(K) \in \mathbb{N}\cup \{\infty\}$ to record  the lowest level at which this occurs,
\[b(K) = b(K\subset Y):= \begin{cases}
\infty, & c(\xi) \neq 0\\
g + \min\{k \mid [\mathbf{c}] = 0 \textrm{ in } H_*(\mathscr{F}_k)\}, & c(\xi)=0.
\end{cases}\] 
Moreover, they proved \cite[Theorem 1.8]{baldwin-velavick} that:

\begin{theorem}
\label{thm:bvv}
A fibered knot $K\subset Y$ has right-veering monodromy  if $b(K)>1$. 
\end{theorem}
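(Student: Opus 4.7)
My plan is to prove the contrapositive: assuming $h\colon S\to S$ is not right-veering, exhibit a chain in the subcomplex $\mathscr{F}_{1-g}\subset\cfhat(-Y)$ whose boundary is $\mathbf{c}$, which forces $b(K)\le 1$. The non-right-veering hypothesis provides a non-separating properly embedded arc $\alpha_1\subset S$ (if an initial witness is separating, it can be replaced by a non-separating one by an elementary argument) and a boundary endpoint $p$ of $\alpha_1$ at which $h(\alpha_1)$ emerges strictly to the left of $\alpha_1$. I would extend $\alpha_1$ to a basis of arcs for $S$ whose complement is a single polygon, and build from it the open-book Heegaard diagram $(\Sigma,\alpha,\beta,z,w)$ for $(-Y,K)$ of Ozsv\'ath--Szab\'o: Heegaard surface $\Sigma=S\cup_{\partial S}(-S)$, each $\alpha$-curve doubling an arc of the basis, each $\beta$-curve obtained from $\alpha$ by pushing its $-S$-half through a small perturbation of $h$, with basepoints $z$ and $w$ on opposite sides of the binding component through $p$.

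In this diagram the contact generator $\mathbf{c}=(c_1,\dots,c_k)$ is the tuple of intersections close to the binding, each $c_i$ sitting on the positive side of its $\alpha_i$. The leftward veering of $h(\alpha_1)$ at $p$ forces an extra intersection $x_1\in\alpha_1\cap\beta_1$ on the opposite side of $\alpha_1$ from $c_1$, inside the $-S$-page near $p$. A local inspection of $\Sigma$ near $p$ exhibits an embedded bigon from $x_1$ to $c_1$ that meets $w$ exactly once and $z$ not at all; this bigon lifts to a Maslov-index-one holomorphic disk in the symmetric product from $\mathbf{x}:=(x_1,c_2,\dots,c_k)$ to $\mathbf{c}$, so $\mathbf{c}$ appears with coefficient $1$ in $\partial\mathbf{x}$. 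The extra $w$-multiplicity shifts the Alexander grading by one, so $\mathbf{x}$ sits in filtration level $1-g$, that is, $\mathbf{x}\in\mathscr{F}_{1-g}$.

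Since $\partial$ is filtration non-increasing, every remaining summand of $\partial\mathbf{x}$ also lies in $\mathscr{F}_{1-g}$, and the main obstacle is to upgrade the identity $\partial\mathbf{x}=\mathbf{c}+\text{(rest)}$ to an actual vanishing $[\mathbf{c}]=0$ in $H_*(\mathscr{F}_{1-g})$. My preferred route is to localize the setup: arrange the arc basis and the perturbation of $h$ so that in a collar neighbourhood of the binding near $p$ the $\alpha$- and $\beta$-curves fit into a small combinatorial model in which the bigon above is the unique $\mathbf{x}$-to-$\mathbf{c}$ disk avoiding $z$, and the remaining summands of $\partial\mathbf{x}$ are themselves boundaries of chains produced by the same local bigon trick at their differing coordinates, yielding inductively a global bounding chain for $\mathbf{c}$ inside $\mathscr{F}_{1-g}$. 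Carrying out the required disk-count in the open-book diagram, cleanly translating the geometric phenomenon of a leftward-veering arc into an algebraic boundary relation at exactly the right filtration level, is the principal technical step.
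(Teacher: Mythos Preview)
Your overall strategy matches the paper's description of the Baldwin--Vela-Vick argument: prove the contrapositive by building the open-book Heegaard diagram for $(-Y,K)$ from a basis containing an arc not sent to the right, and exhibit a generator $\mathbf{d}$ in Alexander grading $1-g$ together with a bigon to $\mathbf{c}$.  However, there are two problems.

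First, your basepoint bookkeeping is inverted.  In the paper's conventions (see \S\ref{ssec:hfk}) the differential on $\cfhat(-Y)$ counts disks with $n_w(\phi)=0$, and the Alexander grading drops by $n_z(\phi)$.  A bigon from a grading-$(1-g)$ generator to $\mathbf{c}$ must therefore have $n_z=1$ and $n_w=0$, not the other way around; a disk with $n_w=1$ simply does not contribute to $\partial$.  This is easily fixed by relabeling, but as written your disk is not in the differential.

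Second, and more seriously, you treat the equality $\partial\mathbf{x}=\mathbf{c}$ as the ``main obstacle'' and propose to achieve it only up to extra terms that you would then kill by an unspecified inductive bounding procedure.  This is not how the argument goes, and your proposed workaround is both vague and unnecessary.  The actual content of the Baldwin--Vela-Vick proof (as the paper summarizes in \S\ref{ssec:hfk}) is a direct domain analysis showing that the bigon is the \emph{sole} positive Maslov-index-one domain out of $\mathbf{d}$ avoiding the relevant basepoint, so that $\partial\mathbf{d}=\mathbf{c}$ on the nose.  The placement of the basepoint in the region adjacent to $\mathbf{c}$, together with the combinatorics of the open-book diagram, forces any other candidate domain to cross the forbidden basepoint.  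Your proposal does not engage with this domain count, and the inductive scheme you sketch (making the residual terms ``themselves boundaries of chains produced by the same local bigon trick'') has no clear mechanism: there is no reason those hypothetical extra generators should admit analogous bigons, and iterating could just as well produce new residual terms indefinitely.  The missing idea is precisely the uniqueness of the bigon domain, not a cancellation argument after the fact.
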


Beyond  its relevance to contact geometry, this theorem has been critical for knot detection results in both Floer homology \cite{baldwin-velavick} and Khovanov homology \cite{bs-trefoil,bdlls,bhs-cinquefoil}.

Our main result is the  converse of Theorem \ref{thm:bvv}:

\begin{theorem}\label{thm:main} A fibered knot $K\subset Y$ has right-veering monodromy if and only if $b(K)>1$.\end{theorem}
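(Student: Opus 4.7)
The ``if'' direction, $b(K)>1\Rightarrow h$ right-veering, is Theorem~\ref{thm:bvv}. I will prove the ``only if'' direction, $h$ right-veering $\Rightarrow b(K)>1$, by contrapositive. Since $\mathscr{F}_{-g}=\F\langle\mathbf{c}\rangle$ has zero differential, $[\mathbf{c}]\ne 0$ in $H_*(\mathscr{F}_{-g})$, so $b(K)\ge 1$ always. Consequently, the assumption $b(K)\le 1$ means exactly that $[\mathbf{c}]=0$ in $H_*(\mathscr{F}_{1-g})$, i.e.\ there is a chain $y\in\mathscr{F}_{1-g}$ with $\partial y=\mathbf{c}$. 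My aim is to extract from such a $y$ a properly embedded arc $\alpha\subset S$ whose image $h(\alpha)$ lies to the left of $\alpha$ at one of its endpoints, certifying that $h$ is not right-veering.

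The core tool, signalled by the abstract, is the relationship between $\hfhat$ of mapping tori and the symplectic Floer homology $\hfs$ of area-preserving surface diffeomorphisms (Cotton-Clay; Hutchings--Sullivan; Lee--Taubes; Lipshitz--Ozsv\'ath--Thurston). Using this correspondence, one replaces $\cfhat(-Y)$ near its lowest filtration levels by a dynamical model: after fixing an area-preserving representative of $h$ together with a compatible almost complex structure on the mapping cylinder, generators at filtration level $-g$ and $1-g$ correspond to specific fixed-point configurations of $h$ (respectively the distinguished boundary-parallel one giving $\mathbf{c}$, and those involving one extra fixed point in the first interior shell), and differentials count pseudo-holomorphic sections of the cylinder whose projections to $S$ trace out arcs together with their $h$-images.

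In this model, a holomorphic section contributing to an incoming differential $y_i\to\mathbf{c}$ from a level $1-g$ generator projects to an immersed bigon between a properly embedded arc $\alpha\subset S$ and its image $h(\alpha)$. Such a bigon is precisely the obstruction to right-veering used in Honda--Kazez--Mati\'c's proof of Theorem~\ref{thm:hkm}: its existence forces $h(\alpha)$ to lie to the left of $\alpha$ at some endpoint. The identity $\partial y=\mathbf{c}$ then produces at least one such bigon surviving cancellation, and the corresponding arc $\alpha$ witnesses the failure of right-veering.

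\emph{Main obstacle.} The technical heart is the translation from analytic Floer data to honest bigons on $S$. This requires upgrading the mapping-torus/symplectic-Floer correspondence to a filtered statement compatible with the Alexander filtration coming from $K$; choosing an area-preserving representative of $h$ and an almost complex structure generic enough that the relevant holomorphic sections admit a topological description; and ruling out contributions to $\partial y$ from periodic orbits or higher-filtration fixed points that could obstruct the extraction of a single non-right-veering arc. Making this passage from the chain-level identity $\partial y=\mathbf{c}$ to a concrete arc $\alpha\subset S$ rigorous is where I expect the bulk of the work to lie.
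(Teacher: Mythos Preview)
Your treatment of the ``if'' direction is fine: that is exactly Theorem~\ref{thm:bvv}.

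For the ``only if'' direction, however, your plan has a genuine gap, and it is \emph{not} the route the paper takes. You propose to build a dynamical/symplectic Floer model of the bottom two filtration levels of $\cfhat(-Y)$ in which a differential hitting $\mathbf{c}$ is realized by a holomorphic section whose shadow on $S$ is a bigon between an arc $\alpha$ and $h(\alpha)$, and then read off from that bigon that $h(\alpha)$ lies to the left of $\alpha$. There are two problems. First, there is no filtered correspondence of the sort you describe: the known isomorphism relates $\hfs(\varphi)$ of a closed surface map to $\hfp$ of its mapping torus in the next-to-top $\spc$ summand, not to the Alexander-filtered $\cfhat(-Y)$; your identification of generators at levels $-g$ and $1-g$ with ``fixed-point configurations'' and of differentials with sections projecting to arc/image bigons is not something one can simply quote. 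Second, and more fundamentally, the paper explicitly explains (see \S\ref{ssec:hfk} and Figure~\ref{fig:nonrvright}) why the bigon-extraction strategy fails even in the well-understood Heegaard model: there exist non-right-veering monodromies which move every arc in a given basis to the right, so a bigon from some $\mathbf{d}$ to $\mathbf{c}$ need not certify that any particular arc is sent to the left. Your symplectic reformulation does not evade this obstruction; it would have to, and you give no mechanism for doing so.

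The paper's argument is entirely different and never attempts to produce an arc. One assumes $h$ is right-veering with $h\not\sim\id$ and $b(K\subset Y)=1$; then $h^{-1}$ is not right-veering, so $b(K\subset -Y)=1$ as well by Theorem~\ref{thm:bvv}. Taking connected sums with auxiliary fibered knots $L_\pm\subset S^1\times S^2$ (cables chosen so that certain knot Floer groups vanish), a $0$-surgery computation (Proposition~\ref{prop:yi}) forces
\[
\dim\hfp(Z_0(J_+),\mathrm{top}{-}1)=\dim\hfp(Z_0(J_-),\mathrm{top}{-}1).
\]
By Lee--Taubes and Kutluhan--Lee--Taubes these groups are $\hfs(h\cup g_\pm)$. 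But a direct analysis via Cotton-Clay's formula for $\hfs$ in terms of standard form representatives, together with a characterization of right-veering in terms of the standard form (Lemma~\ref{lem:rvstd}), shows that right-veering $h$ forces $\dim\hfs(h\cup g_+)=2+\dim\hfs(h\cup g_-)$ (Proposition~\ref{prop:symp}), a contradiction. So the symplectic Floer input is Cotton-Clay's closed-surface dimension formula, not a chain-level bigon picture.
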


Theorem \ref{thm:bvv} was proved in \cite{baldwin-velavick} by careful inspection of a Heegaard diagram adapted to the fibered knot $K\subset Y$. Its converse (our main  result) is substantially more difficult and  considerably more surprising. In particular, it is unclear how to prove this converse by similarly direct, Heegaard-diagrammatic means (see \S\ref{ssec:hfk} for  discussion about this). Instead, our proof of Theorem \ref{thm:main} is   highly novel, blending the recently-established relationships between knot Floer homology and symplectic Floer homology (as in \cite{bhs-cinquefoil,ni-monodromy, ni-monodromy2}; see also \cite{ghiggini-spano}), with a new criterion proved here (Theorem \ref{thm:symp})  which shows for the first time that  symplectic Floer homology can detect whether a monodromy is right-veering. 

\begin{remark}\label{rmk:alternative}
There are  other useful  formulations of the invariant $b(K)$. 
For example, the cycle $\mathbf{c}\in\mathscr{F}_{-g}$ represents  a class in every page of the spectral sequence  \[E_1 \cong \hfkhat(-Y,K)\implies \hfhat(-Y) \cong E_\infty\] associated with  the filtration above, and $E_{b(K)+1}$ is the first page in which this class vanishes. In particular, $b(K) = 1$ if and only if the spectral sequence differential  \[d_1:\hfkhat(-Y,K,1-g)\to \hfkhat(-Y,K,-g)\] is nonzero. By the symmetry of knot Floer homology under orientation reversal, this holds in turn if and only if the corresponding spectral sequence differential \[d_1:\hfkhat(Y,K,g)\to \hfkhat(Y,K,g-1)\] is nonzero. 
\end{remark}

\subsection{Applications}

One application of Theorem \ref{thm:main}, in combination with Theorem \ref{thm:hkm}, is the following purely knot Floer-theoretic characterization of tightness: 

\begin{corollary}
\label{cor:tighthfk} A contact 3-manifold $(Y,\xi)$ is tight if and only if every fibered knot $K\subset Y$ supporting $(Y,\xi)$ satisfies $b(K)>1$.
\end{corollary}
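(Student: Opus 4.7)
The plan is to deduce this corollary directly by concatenating the two biconditionals in Theorems \ref{thm:hkm} and \ref{thm:main}, since both quantify over exactly the same collection of fibered knots supporting $(Y,\xi)$. The corollary carries no further mathematical content beyond these two results.

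For the forward direction, I would begin by supposing that $(Y,\xi)$ is tight. Theorem \ref{thm:hkm} then guarantees that every fibered knot $K\subset Y$ supporting $(Y,\xi)$ has right-veering monodromy, and applying the direction of Theorem \ref{thm:main} that states ``right-veering implies $b(K)>1$''---which is the new content of the present paper---to each such $K$ yields $b(K)>1$.

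For the converse, I would suppose that every fibered knot $K\subset Y$ supporting $(Y,\xi)$ satisfies $b(K)>1$. The other direction of Theorem \ref{thm:main}, namely the Baldwin--Vela-Vick result recorded as Theorem \ref{thm:bvv}, tells us that each such $K$ has right-veering monodromy. Theorem \ref{thm:hkm} then upgrades this to tightness of $(Y,\xi)$.

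There is essentially no obstacle here: the argument is a two-step logical composition of already-stated results. The only mild subtlety, the asymmetry between fibered links and fibered knots in the original Honda--Kazez--Mati{\'c} formulation, has already been absorbed into the statement of Theorem \ref{thm:hkm} as recorded above (see the footnote and the discussion referenced in \S\ref{ssec:nt}). All of the genuinely new mathematical content used here lives in Theorem \ref{thm:main}; the corollary itself is simply its packaging with \cite{hkm-veering}.
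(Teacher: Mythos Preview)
Your proof is correct and matches the paper's approach exactly: the paper simply states that the corollary follows immediately from Theorems \ref{thm:hkm} and \ref{thm:main}, and your argument is just the explicit unpacking of that composition.
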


Another application of our main result is a partial answer to a question posed in  \cite[Question 8.2]{gage-questions} concerning the monodromies of slice fibered knots. The \emph{fractional Dehn twist coefficient} of  a monodromy $h:S\to S$ measures the twisting near $\partial S$ in the free isotopy between $h$ and its  Nielsen--Thurston representative. Inspired by Gabai's notion of \emph{degeneracy slope}, this  coefficient quantifies just how right-veering (or not)   $h$ is, and contains important  information about the associated contact structure \cite{hkm-veering}. 

\begin{remark}
\label{rmk:rvfdtc}For example, if $h$ is neither right-veering nor left-veering, then $h$ has fractional Dehn twist coefficient  equal to zero \cite[Corollary 2.6]{kazez-roberts}.\footnote{A monodromy is \emph{left-veering} if and only if its inverse is right-veering.} \end{remark}

Inspection of low-crossing examples in the knot tables suggests that    monodromies of slice fibered knots  have fractional Dehn twist coefficient zero. However, this is not necessarily the case: as noted in \cite[\S8]{gage-questions}, the $(p,1)$-cable of any   slice fibered knot is  slice and fibered but has fractional Dehn twist coefficient  $1/p$. The authors therefore ask \cite[Question 8.2]{gage-questions} whether the   twist coefficient is always zero for \emph{hyperbolic} slice fibered knots. We do not completely answer this question, but we prove the following closely related corollary, stated in terms of the tau invariant in Heegaard Floer homology (which vanishes for slice knots). 

\begin{corollary}\label{cor:slicefibered}If $K\subset S^3$ is a   fibered knot with thin knot Floer homology satisfying \[|\tau(K)|< g(K),\] then the monodromy of $K$ is neither right-veering nor left-veering.\end{corollary}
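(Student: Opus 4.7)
I would reduce the corollary to Theorem \ref{thm:main} as follows. Since the monodromy of $\overline{K}$ is the inverse of that of $K$, and since mirroring preserves fiberedness, thinness, and the Seifert genus while negating $\tau$, the hypotheses are symmetric under $K \mapsto \overline{K}$. It therefore suffices to prove that every thin fibered knot $J \subset S^3$ with $|\tau(J)| < g(J)$ satisfies $b(J) = 1$: taking $J = K$ shows that the monodromy of $K$ is not right-veering, and taking $J = \overline{K}$ shows that the monodromy of $K$ is not left-veering.

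By Remark \ref{rmk:alternative}, $b(J) = 1$ is equivalent to the nonvanishing of the spectral sequence differential
\[
d_1 : \hfkhat(S^3, J, g) \longrightarrow \hfkhat(S^3, J, g-1)
\]
in the spectral sequence $\hfkhat(J) \Rightarrow \hfhat(S^3) \cong \F$ induced by the knot filtration. First, I would use thinness to argue that this spectral sequence collapses at $E_2$: the $E_1$ page is supported on a single diagonal $\{(A, M) : M - A = \delta\}$, while $d_r$ has bidegree $(-r, -1)$ and so shifts $M - A$ by $r - 1$. For $r \geq 2$ this is nonzero, so $d_r$ lands off the diagonal, where $\hfkhat$ vanishes; hence $d_r = 0$ on every page for $r \geq 2$.

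The reduction from $\hfkhat(J)$ to $\hfhat(S^3) \cong \F$ is therefore accomplished entirely by $d_1$. Because $J$ is fibered, $\hfkhat(J, g) \cong \F$, generated by some class $x$; and since the spectral sequence collapses at $E_2$, the unique surviving class sits in Alexander grading $\tau(J)$, which is strictly less than $g$ by hypothesis. Thus $x$ must be killed at $E_2$. Since $\hfkhat(J, g+1) = 0$, the class $x$ cannot be hit from above under $d_1$; the only possibility is then $d_1(x) \neq 0$ in $\hfkhat(J, g-1)$, as required. The main obstacle I anticipate is simply the bookkeeping of Alexander and Maslov grading conventions needed to confirm the bidegree computation behind the $E_2$-collapse, but this is routine once the conventions of the paper are pinned down.
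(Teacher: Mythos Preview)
Your proposal is correct and follows essentially the same route as the paper's proof: both use thinness to collapse the spectral sequence at $E_2$, deduce that the generator of $\hfkhat$ in top Alexander grading must have nonzero $d_1$ (since it cannot survive to $E_\infty$ and cannot be a boundary), conclude $b=1$ via Remark~\ref{rmk:alternative}, and then apply Theorem~\ref{thm:main} to both $K$ and its mirror. The only cosmetic difference is that the paper phrases the mirror step as $K\subset -S^3$ rather than $\overline{K}\subset S^3$.
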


\begin{remark}
A fibered knot $K\subset S^3$ satisfies $|\tau(K)|< g(K)$ if and only if neither $K$ nor its mirror is  strongly quasipositive \cite[Theorem 1.2]{hedden-positivity}. 
\end{remark}

Since $|\tau(K)|<g(K)$ is satisfied for every nontrivial slice knot, Corollary \ref{cor:slicefibered} helps explain the  observations about  fractional Dehn twist coefficients of low-crossing slice fibered knots  as many of these have thin knot Floer homology. 

Corollary \ref{cor:slicefibered} also has applications to Dehn surgery. A knot $K\subset S^3$ is called \emph{persistently foliar} if for every $r\in \Q$ there exists a co-orientable taut foliation of the knot complement meeting the boundary transversally in curves  of slope $r$. Note that every nontrivial Dehn surgery on a persistently foliar knot admits a co-orientable taut foliation. It is known that fibered knots whose monodromies are neither right-veering nor left-veering  are persistently foliar \cite{roberts, roberts-composite}.  Therefore, Corollary \ref{cor:slicefibered}  implies the following:

\begin{corollary}\label{cor:slicefiberedtaut}If $K\subset S^3$ is a   fibered knot with thin knot Floer homology satisfying \[|\tau(K)|< g(K),\] then $K$ is persistently foliar.\end{corollary}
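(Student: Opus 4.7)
The plan is to deduce this corollary as an immediate consequence of Corollary \ref{cor:slicefibered} together with Roberts's theorem on persistent foliations, both of which are already available once the main theorem is in hand. In particular, the proof should be a very short argument that chains together two results.

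First, I would invoke Corollary \ref{cor:slicefibered}: assuming $K \subset S^3$ is a fibered knot with thin knot Floer homology and $|\tau(K)| < g(K)$, this corollary tells us directly that the monodromy $h \colon S \to S$ of the fibration of $S^3 \setminus K$ is neither right-veering nor left-veering. This is the substantive step, but by hypothesis we may use it without further argument.

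Next, I would apply the result of Roberts from \cite{roberts, roberts-composite} (noted just before the statement in the excerpt), which says that any fibered knot in $S^3$ whose monodromy is neither right-veering nor left-veering is persistently foliar: for every $r \in \Q$, the exterior of $K$ admits a co-orientable taut foliation meeting $\partial$ transversely in curves of slope $r$. Combining these two steps yields the persistent foliarity of $K$.

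There is no real obstacle here beyond correctly citing the two inputs; the substantive difficulty is entirely absorbed into Corollary \ref{cor:slicefibered} (and thus into Theorem \ref{thm:main}). I would structure the proof as two sentences, the first applying Corollary \ref{cor:slicefibered} to rule out veering of either handedness, and the second invoking Roberts's construction to produce the requisite family of taut foliations.
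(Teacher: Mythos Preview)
Your proposal is correct and matches the paper's own proof essentially verbatim: apply Corollary~\ref{cor:slicefibered} to conclude the monodromy is neither right- nor left-veering, then cite Roberts's theorem to obtain persistent foliarity. The paper additionally remarks that $|\tau(K)|<g(K)$ forces $K$ to be nontrivial, but this is immediate from $g(K)>0$.
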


\begin{remark}
This corollary is consistent with the L-space conjecture, since fibered knots which are not strongly quasipositive do not admit nontrivial L-space surgeries.
\end{remark}

Fibered alternating knots with $|\tau(K)|< g(K)$ satisfy the hypotheses of Corollary \ref{cor:slicefiberedtaut}. By \cite[Proposition 3.7]{ni-alternating}, these are precisely the fibered alternating knots which are not connected sums of positive torus knots of the form $T_{2,2n+1}$ or  the mirrors of such connected sums. Such knots are thus persistently foliar. In particular, we can use this  to prove that Dehn surgeries on fibered alternating knots satisfy part of the L-space conjecture:

\begin{corollary}
\label{cor:taut} Suppose that $K\subset S^3$ is a fibered alternating knot and $r\in \Q$. Then $S^3_r(K)$ admits a co-orientable taut foliation if and only if it is not an L-space.
\end{corollary}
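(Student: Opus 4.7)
The plan is to combine Corollary~\ref{cor:slicefiberedtaut} with Proposition~3.7 of~\cite{ni-alternating} and known results for surgeries on torus knots and their connected sums. The forward direction --- that a co-orientable taut foliation on $S^3_r(K)$ obstructs $S^3_r(K)$ from being an L-space --- is standard: Eliashberg--Thurston perturbs the taut foliation to a weakly symplectically semi-fillable contact structure, and Ozsv\'ath--Szab\'o show via the contact invariant that such a manifold cannot be an L-space. So the content lies in the converse: every non-L-space surgery carries a co-orientable taut foliation.

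By \cite[Proposition~3.7]{ni-alternating}, a fibered alternating knot $K \subset S^3$ either (i) satisfies $|\tau(K)| < g(K)$, or (ii) is of the form $\pm\,\#_i T_{2,2n_i+1}$. In case (i), Corollary~\ref{cor:slicefiberedtaut} shows that $K$ is persistently foliar, so every $r$-surgery with $r \neq 0$ admits a co-orientable taut foliation, while the $0$-surgery admits one obtained by capping off the fibration. Moreover, $|\tau(K)| < g(K)$ implies that neither $K$ nor its mirror is an L-space knot, so by Ozsv\'ath--Szab\'o's theorem that any $S^3$-knot admitting an L-space surgery must itself (or its mirror) be an L-space knot, no $r$-surgery on $K$ is an L-space. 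The biconditional then holds trivially, both sides being universally true.

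In case (ii), passing to the mirror if necessary we may assume $K = \#_i T_{2,2n_i+1}$, an L-space knot of genus $g = \sum n_i$. Ozsv\'ath--Szab\'o's classification of L-space surgeries on L-space knots identifies the L-space slopes as precisely $r \geq 2g-1$, and for these slopes the forward direction already rules out a taut foliation. For the remaining slopes $r < 2g-1$: the $0$-surgery carries a taut foliation from the capped-off fibration, and for $r \neq 0$ we observe that $E(K)$ is a graph manifold (its JSJ pieces are Seifert-fibered torus-knot exteriors, glued along swallow-follow tori arising from the connect-sum decomposition), so Dehn filling gives that $S^3_r(K)$ is a closed graph manifold. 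The resolution of the L-space conjecture for graph manifolds by Hanselman--Rasmussen--Rasmussen--Watson then produces a co-orientable taut foliation on $S^3_r(K)$. The main obstacle is in case (ii): recognizing the graph-manifold structure of these surgeries and invoking the substantial HRRW machinery, while case (i) is essentially handed to us by Corollary~\ref{cor:slicefiberedtaut}.
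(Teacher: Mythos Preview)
Your treatment of case (i) matches the paper's. In case (ii), however, there is a genuine error: a connected sum $K = \#_{i=1}^k T_{2,2n_i+1}$ with $k \geq 2$ is \emph{not} an L-space knot---nontrivial L-space knots are prime by \cite{krcatovich-prime}. Consequently your identification of the L-space slopes as exactly $r \geq 2g-1$ is wrong for $k \geq 2$: such a $K$ has \emph{no} L-space surgeries at all. This leaves a gap in the reverse direction you are trying to prove: for $k \geq 2$ and $r \geq 2g-1$ the manifold $S^3_r(K)$ is a non-L-space, but your argument produces no taut foliation there, since you dismissed those slopes as L-space slopes.

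The gap is easily repaired within your own framework: every surgery on such a $K$ is a graph manifold, so you can invoke HRRW uniformly for all $r$ rather than only for $r < 2g-1$, and there is no need to identify the L-space slopes in advance. For comparison, the paper takes a more elementary route in case (ii): for $k=1$ it uses Lisca--Stipsicz on small Seifert fibered spaces (handling the reducible slope $r = 2(2n_1+1)$ separately), while for $k \geq 2$ it combines Krcatovich's primeness result with \cite[Theorem~6.1]{roberts-composite} to see directly that $K$ is persistently foliar with no L-space surgeries, avoiding the graph-manifold machinery altogether.
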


There is also a   diagrammatic way of proving that  the fibered alternating knots considered above are persistently foliar, to be explained in forthcoming work of Delman--Roberts \cite{dm-alternating}.\footnote{Their results pertain to non-fibered alternating knots as well.} 
We note, however, that the hypotheses of Corollary \ref{cor:slicefiberedtaut} are  satisfied by many non-alternating knots as well.
For instance,  quasialternating knots have thin knot Floer homology  \cite{mo-qa}. Among  the eleven non-alternating prime knots with nine or fewer crossings,  seven   are fibered and quasialternating and satisfy $|\tau(K)|< g(K)$, according to KnotInfo  \cite{knotinfo}: \[8_{20}, 8_{21},9_{43},9_{44},9_{45},9_{47},9_{48}.\]
By Corollary \ref{cor:slicefiberedtaut}, these knots are  therefore persistently foliar.\footnote{Of the 50 non-alternating prime knots with \emph{ten} or fewer crossings, 26 are fibered and quasialternating and satisfy $|\tau(K)|< g(K)$, and are therefore persistently foliar.}

Lastly, by combining Theorem \ref{thm:main} and Corollary \ref{cor:slicefibered} with  work of Ni in \cite[Theorem 1.1]{ni-exceptSurg}, we also obtain the following result about exceptional surgeries.

\begin{corollary}\label{cor:ExceptSurg}
Let   $K\subset S^3$ be a hyperbolic fibered knot such that $S^3_{r}(K)$ is non-hyperbolic for some rational number $r=p/q$ with $\gcd(p,q)=1$.  \begin{itemize}
\item If $\tau(K) = g(K)$, then $0\le r\le4g(K)$.
\item If $\tau(K) = -g(K)$, then $-4g(K)\le r\le 0$.
\item If $|\tau(K)|< g(K)$ and the knot Floer homology of $K$ is thin, then $|q|\le2$.
\end{itemize}
\end{corollary}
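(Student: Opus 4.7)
The plan is to treat this as an assembly corollary: Ni's theorem \cite[Theorem 1.1]{ni-exceptSurg} constrains the slopes of exceptional (i.e., non-hyperbolic) surgeries on a hyperbolic fibered knot $K\subset S^3$ in terms of the veering type of its monodromy, giving a bound of the form $0\le r\le 4g(K)$ when the monodromy is right-veering, and the stronger constraint $|q|\le 2$ when the monodromy is neither right- nor left-veering. My task would therefore reduce to translating each of the three $\tau$-hypotheses into an appropriate statement about the monodromy.

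For the first case, I would invoke Hedden's characterization \cite[Theorem 1.2]{hedden-positivity}, according to which a fibered knot in $S^3$ satisfies $\tau(K)=g(K)$ if and only if it is strongly quasipositive. A strongly quasipositive fibered knot has monodromy expressible as a product of positive Dehn twists along curves and boundary-parallel arcs in its fiber surface, hence right-veering; plugging this into Ni's theorem yields $0\le r\le 4g(K)$. For the second case, I would apply the first case to the mirror $\overline{K}$, using that $\tau(\overline{K})=-\tau(K)=g(\overline{K})$ and $g(\overline{K})=g(K)$, and that $S^3_r(K)$ is orientation-reversed homeomorphic to $S^3_{-r}(\overline{K})$. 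Thus non-hyperbolicity at slope $r$ on $K$ transports to non-hyperbolicity at slope $-r$ on $\overline{K}$, and the first case yields $0\le -r\le 4g(K)$.

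For the third case, I would apply Corollary \ref{cor:slicefibered} of this paper, which itself rests on Theorem \ref{thm:main}: the hypotheses $|\tau(K)|<g(K)$ and thin knot Floer homology force the monodromy of $K$ to be neither right-veering nor left-veering, at which point the corresponding branch of Ni's theorem yields $|q|\le 2$. There is no genuine obstacle beyond verifying that strong quasipositivity of a fibered knot implies right-veering monodromy (which is classical) and keeping track of sign conventions under mirroring; the content of the corollary is essentially contained in the interplay between Theorem \ref{thm:main}, Hedden's positivity theorem, and Ni's exceptional surgery bounds.
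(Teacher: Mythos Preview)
Your overall plan is sound and matches the paper's approach: cases two and three are handled exactly as the paper does (mirroring for the second, Corollary~\ref{cor:slicefibered} plus Ni's theorem for the third). For the first case, the paper's primary argument is slightly different from yours: it observes directly from the spectral-sequence description of $\tau$ (Remark~\ref{rmk:tau}) that $\tau(K)=g(K)$ forces the generator of $\hfkhat(S^3,K,g)$ to survive, hence the $d_1$ component $\hfkhat(S^3,K,g)\to\hfkhat(S^3,K,g-1)$ vanishes, so $b(K)>1$ by Remark~\ref{rmk:alternative}, and then right-veering follows from Theorem~\ref{thm:bvv}. The paper does also footnote your Hedden-based route as an alternative.

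However, there is a gap in your justification of that route. You assert that a strongly quasipositive fibered knot ``has monodromy expressible as a product of positive Dehn twists along curves and boundary-parallel arcs in its fiber surface, hence right-veering.'' This is not known in general: Giroux's criterion only says that a Stein-fillable contact structure admits \emph{some} supporting open book with positive factorization, not that every such open book has one, and there are examples (due to Wand, Baker--Etnyre--Van Horn-Morris, and others) of open books supporting Stein-fillable contact structures whose monodromy admits no positive factorization. The correct bridge is the one the paper's footnote uses: Hedden's theorem says $\tau(K)=g(K)$ is equivalent to $K$ supporting the tight contact structure on $S^3$, and then Theorem~\ref{thm:hkm} (Honda--Kazez--Mati\'c) gives right-veering directly. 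With that correction your argument goes through.
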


We remark that the first two statements only require Theorem \ref{thm:bvv} while the last statement requires the full strength of Theorem \ref{thm:main}.

We provide a detailed sketch of our proof of Theorem \ref{thm:main} below.

\subsection{Proof outline} \label{ssec:proofoutline}Suppose  that $K\subset Y$ is a fibered knot with right-veering monodromy $h:S\to S$.  One can show directly that Theorem \ref{thm:main} holds when $h$ is isotopic to the identity map rel boundary, so let us assume that $h\not\sim \id$. We wish to prove that $b(K)>1$. Let us suppose for a contradiction that $b(K)=1$. 

Let $L\subset S^1\times S^2$ be a fibered knot which represents a contact structure $\xi$ on $S^1\times S^2$ with nontorsion $\spc$ structure $\spinc_\xi$.  Let $L_+\subset S^1\times S^2$ be the $(3,3n+1)$-cable of $L$ for $n$ large, and let  $g_+:F \to F$ denote the monodromy of $L_+$.
Let $L_-\subset S^1\times S^2$ denote the ``mirror" of $L_+$, with monodromy $g_-:F \to F$ given by the inverse of $g_+$.

Since $h\not\sim \id$ is right-veering, it  follows that the monodromy $h^{-1}$ of the mirror $K\subset -Y$ is not right-veering. Therefore, $b(K\subset -Y)=1$ by Theorem \ref{thm:bvv}. In particular,   there is a nontrivial spectral sequence differential \[\hfkhat(Y,K,1-g) \to \hfkhat(Y,K,-g),\] as in Remark \ref{rmk:alternative}. Since $b(K\subset Y)=1$, there is likewise a nontrivial differential \[\hfkhat(Y,K,g) \to \hfkhat(Y,K,g-1).\] We show that the nontriviality of these  differentials implies that the Heegaard Floer  groups of $0$-surgeries on the knots $J_\pm = K\# L_\pm$ in the 3-manifold $Z= Y\#(S^1\times S^2)$ have the same dimensions in their ``next-to-top" $\spc$ summands (see Proposition \ref{prop:yi}), \begin{equation}\label{eqn:hfgroups}\dim\hfp(Z_0(J_+),\textrm{top}{-}1) = \dim\hfp(Z_0(J_-),\textrm{top}{-}1).\end{equation} Our proof   is inspired by  those of \cite[Proposition 3.1]{ni-monodromy} and \cite[Proposition 4.1]{ni-monodromy2}, and uses the  $0$-surgery formula  in Heegaard Floer homology (our requirement that $L$ represents a nontorsion $\spc$ structure, and our taking the cables $L_\pm$ helps when applying this  formula).

Note that the manifold $Z_0(J_\pm)$ is homeomorphic to the mapping torus of the reducible homeomorphism \[h \cup g_\pm: S\cup F \to S\cup F.\] Since $g(S\cup F)\geq 3$, the Heegaard Floer   groups  above are isomorphic to the symplectic Floer homology groups of these homeomorphisms, 
\begin{equation}\label{eqn:hfhfs}\hfp(Z_0(J_\pm),\textrm{top}{-}1) \cong \hfs(h\cup g_\pm),\end{equation} by  work of Lee--Taubes \cite{lee-taubes} and Kutluhan--Lee--Taubes \cite{klt1}. 
These symplectic Floer  groups  can be computed from certain standard form representatives of the mapping classes of $h\cup g_\pm$, as  in \cite{cotton-clay}.  From an analysis of these standard representatives, we prove (see Theorem \ref{thm:symp}) that \[\dim \hfs(h\cup g_+)  = 2+\dim \hfs(h\cup g_-),\] contradicting \eqref{eqn:hfgroups} and \eqref{eqn:hfhfs}. This proves Theorem \ref{thm:main}. 
Implicit in the final step  is a means by which symplectic Floer homology   detects whether a mapping class is right-veering. This is one of the key new insights in this paper, and  may be of independent interest.

\subsection{Organization} In \S\ref{sec:background}, we  review  right-veering  homeomorphisms and their importance in contact geometry,   fractional Dehn twist coefficients, knot Floer homology, the definition of $b$,  and  Cotton-Clay's calculation of symplectic Floer homology.  In \S\ref{sec:0surg}, we prove the equality  \eqref{eqn:hfgroups} following  work of Ni in \cite{ni-monodromy, ni-monodromy2}. We  prove Theorem \ref{thm:main} and its corollaries in \S\ref{sec:proof}.

\subsection{Acknowledgements} We thank Andy Cotton-Clay, Nathan Dunfield, Matt Hedden, Ying Hu, Siddhi Krishna, Tye Lidman, Rachel Roberts, and Shea Vela-Vick for helpful correspondence. We particularly thank Siddhi for pointing out Corollary \ref{cor:slicefiberedtaut}, and for first introducing us to the question posed in \cite[Question 8.2]{gage-questions}.  We are also grateful to the referee for their helpful feedback on the initial version of this paper.

\section{Preliminaries}\label{sec:background} In this section and beyond, \emph{surface}  will refer to a compact, oriented surface with (possibly empty) boundary. All surface homeomorphisms we consider will be orientation-preserving. \emph{Isotopy} of surface homeomorphisms  will refer to  isotopy rel boundary, and will be indicated by $\sim$.
 We will use the term \emph{free isotopy} to refer to  isotopy without boundary constraints.

\subsection{Right-veering homeomorphisms}
\label{ssec:nt}
Suppose $\Sigma$ is a surface with nonempty boundary. Given  two properly-embedded arcs $a,b\subset \Sigma,$ we say that \emph{$a$ is to the right of $b$ at  $p$,} denoted by $a \geq_p b$, if $p$ is  a shared endpoint \[p\in\partial a \cap \partial b \subset \partial \Sigma\] of these arcs, and either:
\begin{itemize}
\item $a$ is  isotopic to $b$ rel boundary, or 
\item after isotoping $a$ rel boundary so that it intersects $b$ minimally, $a$ is to the right of $b$ in a neighborhood of $p$, as shown in Figure \ref{fig:left}. 
\end{itemize}
We say that \emph{$a$ is to the right of $b$}, denoted by $a\geq b$,  if  $a$ is to the right of $b$ at both endpoints. 

\begin{figure}[ht]
\labellist
\tiny \hair 2pt

\pinlabel $b$ at 32 28
\pinlabel $a$ at 52 27
\pinlabel $p$ at 39 -3
\pinlabel $\Sigma$ at 13 57
\pinlabel $\partial\Sigma$ at 86 4
\endlabellist
\centering
\includegraphics[width=2cm]{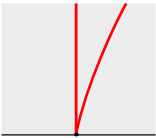}
\caption{$a$ is to the right of $b$  in a neighborhood of  $p$.
}
\label{fig:left}
\end{figure}

Suppose that $\varphi:\Sigma \to \Sigma$ is a homeomorphism of $\Sigma$ which restricts to the identity on a boundary component $B$ of $\Sigma$. Then we say that $\varphi$ is \emph{right-veering at $B$} if \[\varphi(a)\geq_p a\] for every properly-embedded arc $a\subset \Sigma$ and every $p\in\partial a\cap B$.  If $\varphi$ restricts to the identity on all of $\partial \Sigma$, then we say that $\varphi$ is  \emph{right-veering} if  \[\varphi(a) \geq a\]  for every properly-embedded arc in $a\subset \Sigma$; equivalently, if $\varphi$ is right-veering at each boundary component of $\Sigma$. A map  is  \emph{left-veering} if its inverse is right-veering. 

As mentioned in the introduction, the notion of right-veering is important in low-dimensional topology due  to the following theorem of Honda--Kazez--Mati{\'c} \cite{hkm-veering}:

\begin{theorem}\label{thm:hkm2}
A contact 3-manifold $(Y,\xi)$ is tight if and only if  every fibered link $L\subset Y$ supporting $(Y,\xi)$ has right-veering monodromy.\end{theorem}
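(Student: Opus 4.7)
The plan is to follow Honda--Kazez--Mati{\'c}'s argument from \cite{hkm-veering}, whose theorem this restates, and then in the discussion that follows explain how it implies the fibered-knot version stated as Theorem \ref{thm:main}.

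For the direction ``every supporting fibered link has right-veering monodromy $\Rightarrow (Y,\xi)$ is tight,'' I would argue by contrapositive. Suppose $(S,\phi)$ is an open book supporting $\xi$ whose monodromy $\phi$ is not right-veering at a boundary component $B$, so that some properly embedded arc $a\subset S$ with endpoint $p\in B$ has $\phi(a)$ lying to the left of $a$ at $p$. The geometric core of the argument is to assemble, using $a$ on the page $S\times\{0\}$, $\phi(a)$ on the page $S\times\{1\}$ (glued to $S\times\{0\}$ via the monodromy), and a short arc in a neighborhood of $B$ in the binding, an embedded disk $D\subset Y$ whose boundary can be Legendrian realized on a page. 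A Thurston--Bennequin-type framing computation along $\partial D$, relating the page framing to the contact framing, then shows that $D$ is an overtwisted disk, so $\xi$ is overtwisted.

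For the converse direction, the strategy is to start from an overtwisted disk $D\subset (Y,\xi)$ and use convex surface theory together with the Giroux correspondence to isotope $D$ into a position compatible with some supporting open book. The trace of $\partial D$ on the pages then exhibits an arc whose image under the monodromy lies to its left at an endpoint, witnessing the failure of right-veering. The most delicate step in either direction is the framing computation in the forward argument, where one must carefully compare the page framing to the contact framing near the binding.

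Finally, to recover the fibered-knot version of Theorem \ref{thm:main} from the link statement here: the ``only if'' direction is immediate, since every fibered knot is in particular a fibered link. For the ``if'' direction, observe that any fibered link $L\subset Y$ supporting $\xi$ with non-right-veering monodromy can be converted, by plumbing on a sequence of positive Hopf bands (Murasugi sum), into a fibered knot $K\subset Y$ that still supports $\xi$; the resulting monodromy is the composition of the original with additional positive Dehn twists along curves disjoint from the offending arc, so it remains non-right-veering at the same boundary component. Hence the fibered-link and fibered-knot versions are equivalent.
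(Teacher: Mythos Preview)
The paper does not prove Theorem~\ref{thm:hkm2}; it is simply quoted from \cite{hkm-veering}. What the paper \emph{does} prove in this section is the fibered-knot version, Theorem~\ref{thm:hkm} (not Theorem~\ref{thm:main}, which is the $b(K)$ result --- your label is off), deriving it from Theorem~\ref{thm:hkm2}.

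Regarding your sketch of the Honda--Kazez--Mati{\'c} argument itself: the ``not right-veering $\Rightarrow$ overtwisted'' direction is in the right spirit, though the actual construction in \cite{hkm-veering} is more delicate than assembling a single disk from $a$ and $\phi(a)$. For the converse, the usual route is not the convex-surface argument you describe but the observation that every overtwisted contact structure is supported by a negatively stabilized open book, and the co-core arc of a negative stabilization is visibly sent to the left.

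The part that \emph{can} be compared with the paper is your derivation of the knot version from the link version, and there your idea --- positively stabilize to reduce the number of binding components while preserving a left-moved arc --- is exactly the paper's induction. One detail needs correcting: the paper chooses the stabilizing arc $c$ (and hence the twist curve $\gamma$) disjoint from $\varphi(a)$, not from $a$. This is what guarantees $D_\gamma\circ\varphi(a)=\varphi(a)$, so that $a$ is still sent to the left in the stabilized open book; if $\gamma$ were only disjoint from $a$ but met $\varphi(a)$, the positive twist could push $\varphi(a)$ back to the right of $a$.
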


A version of this result  was stated in Theorem \ref{thm:hkm}  in terms of  fibered \emph{knots} rather than  links; we explain below how Theorem \ref{thm:hkm} follows from Theorem \ref{thm:hkm2}.

\begin{proof}[Proof of Theorem \ref{thm:hkm}] Suppose that $(Y,\xi)$ is tight. By Theorem \ref{thm:hkm2}, every fibered link---in particular, every fibered knot---supporting $(Y,\xi)$ has right-veering monodromy, proving one direction of Theorem \ref{thm:hkm}. 

For the other direction, let us suppose that every fibered knot supporting $(Y,\xi)$ has right-veering monodromy. We must show that $(Y,\xi)$ is tight. By Theorem \ref{thm:hkm2}, it suffices to prove that every fibered link supporting $(Y,\xi)$ has right-veering monodromy. We will prove this by induction on the number of link components  (the base case  holds by assumption). 

Suppose that every fibered link  supporting $(Y,\xi)$ with $n$ components has right-veering monodromy. Let $L\subset Y$ be  a fibered link supporting $(Y,\xi)$ with $n+1$ components, with fiber $\Sigma$ and monodromy $\varphi$. Suppose, for a contradiction, that  $a\subset \Sigma$ is a properly-embedded arc which is not sent to the right by $\varphi$. Let $c\subset \Sigma$ be an arc  disjoint from $\varphi(a)$ whose endpoints lie on two different boundary components of $\Sigma$. Let $\Sigma'$ be the surface obtained from $\Sigma$ by attaching a 1-handle along the endpoints of $c$, and let $\gamma\subset \Sigma'$ be the curve obtained as the union of $c$ with a core of this handle. Letting $D_\gamma$ denote a right-handed Dehn twist about $\gamma$, we observe that the homeomorphism  \[\varphi' = D_{\gamma}\circ \varphi \] does not send $a\subset \Sigma'$ to the right either, and is therefore not right-veering. As the open book $(\Sigma',\varphi')$ is a positive stabilization of $(\Sigma,\varphi)$,   the  associated fibered link $L'\subset Y$ also supports $(Y,\xi)$. But   $L'$ has $n$ components, so  its monodromy $\varphi'$ is  right-veering, a contradiction.
\end{proof}

 \subsection{Fractional Dehn twist coefficients}
 \label{ssec:fdtc}
One can quantify  how right-veering a  homeomorphism is using the notion of \emph{fractional Dehn twist coefficient}, as introduced by Honda--Kazez--Mati{\'c} in \cite{hkm-veering}. We explain this notion below in terms of certain standard representatives of surface homeomorphisms.

Let $\varphi:\Sigma \to \Sigma$ be a homeomorphism of a surface $\Sigma$. We say that $\varphi$ is \emph{periodic} if $\varphi^n = \id$ for some positive integer $n$; when $\chi(\Sigma)<0$,  we will   assume  that $\varphi$ is an isometry with respect to a hyperbolic metric on $\Sigma$ for which $\partial \Sigma$ is a geodesic.
We say instead that $\varphi$ is   \emph{pseudo-Anosov} if there is a transverse pair of singular measured foliations  \[(\mathcal{F}_s,\mu_s)\textrm{ and }(\mathcal{F}_u,\mu_u)\] of $\Sigma$, called the \emph{stable} and \emph{unstable} foliations of $\varphi$, such that \[\varphi(\mathcal{F}_s,\mu_s) = (\mathcal{F}_s,\lambda^{-1}\mu_s) \textrm{ and } \varphi(\mathcal{F}_u,\mu_u) = (\mathcal{F}_u,\lambda\mu_u)\] for some real number $\lambda >1$. These foliations meet $\partial \Sigma$ in a finite number of singular leaves called \emph{prongs}.

Suppose that $\varphi$ is pseudo-Anosov and fixes  a component $B$ of $\partial \Sigma$ setwise. Let $p_1,\dots,p_k$ denote the intersection points of $B$ with the prongs of the stable foliation of $\varphi$, ordered according to the orientation of $B$. Then there is an integer $n$ such that \[\varphi(p_m) = p_{m+n}\] for all $m$, where the subscripts are taken mod $k$. The restriction of $\varphi$ to $B$ is thus isotopic rel $\{p_1,\dots,p_k\}$ to a rotation by $2\pi n/k$. One can perturb $\varphi$ via isotopy rel  $\{p_1,\dots,p_k\}$  in a standard way,   described  in  \cite[\S4.2]{cotton-clay}, to a  smooth map which restricts   to $B$ as a rotation by $2\pi n/k$ on the nose. We will henceforth assume when talking about pseudo-Anosov maps that they are of this perturbed form. In particular, we will assume that both periodic and pseudo-Anosov homeomorphisms of a surface restrict to periodic maps on the boundary.
We use these notions to define standard representatives of surface homeomorphisms below, closely following \cite[Definition 4.6]{cotton-clay}.

Suppose that $\varphi:\Sigma\to\Sigma$ is a homeomorphism of a surface $\Sigma$. By Thurston's classification of surface homeomorphisms \cite{thurston-diffeomorphisms}, $\varphi$ is isotopic to a homeomorphism $\phi$ of   the following form. There is a finite union $N$ of disjoint closed annuli and curves in $\Sigma$ which is invariant under $\phi$ and $\phi^{-1}$ such that: \begin{itemize}
\item \label{def:item1}If $A$ is  an annulus component of $N$, and $\ell$ is the smallest positive integer such that $\phi^\ell(A)=A$, then  $\phi^\ell|_A$ is either a \emph{twist map} or a \emph{flip-twist map}. That is, with respect to an identification \[A \cong [0,1]\times \R/\Z,\] the map $\phi^\ell|_A$ takes one of the following two forms:
\begin{align*}
\textrm{(twist) } \,\,\,\,\,& (q,p) \mapsto (q,p+f(q)),\\
\textrm{(flip-twist) } \,\,\,\,\,& (q,p) \mapsto (1-q,-p-f(q)), \end{align*}
where $f:[0,1]\to \R$ is a strictly monotonic smooth map such that $\phi^\ell|_A$ restricts to a periodic map on every boundary component of $A$ which is disjoint from $\partial \Sigma$. \\

\item Let $A$ and $\ell$ be as above. If $\ell=1$ and $\phi|_A$ is a twist map, then $\textrm{Im}(f)\subset [0,1]$. Such an annulus $A$ is called a \emph{twist region}, and is \emph{positive} or \emph{negative} if $f$ is increasing or decreasing, respectively; the condition on $\mathrm{Im}(f)$ implies that $\phi$ has no fixed points in the interior of $A$. We require that parallel twist regions have the same sign. If $\ell=1$ and $\phi|_A$ is a flip-twist map, then $A$ is called a \emph{flip-twist region}.\\
 
\item Let $S$ be the closure of a component of $\Sigma\setminus N$, and $\ell$  the smallest positive integer such that $\phi^\ell(S)=S$. Then $\phi^\ell|_S$ is either  periodic or pseudo-Anosov. We call $S$ a \emph{fixed component} if $\ell=1$ and  $\phi|_S = \id$. We require that $S$ is fixed if it is an annulus. In particular, parallel twist regions are separated by fixed annuli. A \emph{multitwist region} is a maximal annular subsurface of $\Sigma$ given as a union  of twist regions and the fixed annuli between them.\\
 
\item $N$ is called the \emph{invariant set for $\phi$}. We will assume that it is minimal with respect to inclusion. In particular, there is no curve component of $N$ which abuts a fixed region on both sides. There is a canonical such $N$ up to isotopy.
\end{itemize}
The map $\phi$ is called a \emph{standard representative} of $\varphi$. 

\begin{remark}
A multitwist region $R$  consists of some number $k\geq 0$ of twist regions on which $\phi$ is a full Dehn twist, and at most two twist regions,  each at an end of $R$, on which $\phi$ is a partial Dehn twist. In particular, if $R$ abuts a  boundary component on which $\phi$ is the identity, then $R$ has at most one partial twist region, at an end  interior to the surface, as described below and illustrated in Figure \ref{fig:stdform}. We will encounter  multitwist regions with up to two partial twist regions in the proof of Theorem \ref{thm:symp},  as illustrated in Figure \ref{fig:gluing}.
\end{remark}

\begin{remark}
\label{rmk:twists}
Suppose  $\phi_0$ and $\phi_1$ are standard  form  homeomorphisms of surfaces $\Sigma_0$ and $\Sigma_1$, respectively. Let $B_i$ be a boundary component of $\Sigma_i$ on which $\phi_i$ is the identity and which abuts a  twist region $A_i$ for $\phi_i$, for $i=0,1$. Let \[\Sigma=\Sigma_0\bigcup_{B_0=B_1} \Sigma_1\] be the surface obtained by gluing $\Sigma_0$ to $\Sigma_1$ along these boundary components.  If  $A_0$ and $A_1$ are   twist regions of the same sign, then a standard  representative $\phi$ of the induced map on $\Sigma$ is given by  the union of  the maps $\phi_0$ and $\phi_1$ on either side of a fixed annulus  inserted between $A_0$ and $A_1$. The new fixed annulus is required by the condition  $\mathrm{Im}(f)\subset [0,1]$ in the definition of  a twist region above; in particular, $A_0$ and $A_1$ do not glue to form a single twist region for $\phi$ in this case. By contrast, if $A_0$ and $A_1$ are twist regions of opposite signs, then $\phi$ does not include this additional fixed annulus; the union of $A_0$ and $A_1$ in this case \emph{is} a single twist region for $\phi$. This observation explains  the difference between the standard representatives $\phi_\pm$ in one case in the proof of Theorem \ref{thm:symp}.
\end{remark}

Suppose that $\varphi:\Sigma\to\Sigma$ is a homeomorphism of a surface $\Sigma$ which restricts to the identity on a boundary component $B$, and let $\phi$ be a standard representative of $\varphi$. The \emph{fractional Dehn twist coefficient of $\varphi$ at $B$}, denoted by \[c_B(\varphi)\in\Q,\] is defined as follows. If $B$ does not abut a multitwist region for $\phi$, then \[c_B(\varphi)=0.\] If $B$ does abut a multitwist region $R$, then for some integer $k\geq 0$ and some $\epsilon \in \{\pm 1\}$,  $R$ is a union of $k+1$ twist regions \[A_1,\dots,A_{k+1}\cong [0,1]\times \R/\Z,\] together with the $k$ fixed annuli between them, where:
\begin{itemize}
\item $B$ is identified with $ \{0\}\times \R/\Z\subset A_1,$
\item   $\phi|_{A_i}$ is isotopic  to the map \[(q,p) \to (q, p+\epsilon q),\] for each $i=1,\dots,k$, and 
\item $\phi|_{A_{k+1}}$ is isotopic to the map \[(q,p) \to (q, p+rq)\] for some rational number $r$ with $|r|\in (0,1).$
\end{itemize} That is, $\varphi$ is a full $\epsilon$-twist on each of $A_1,\dots,A_k$, and a partial twist on $A_{k+1}$, as indicated in Figure \ref{fig:stdform}. In this case, we define \[c_B(\varphi) = \epsilon k+r.\] These twist coefficients satisfy \[c_B(\varphi^n) = nc_B(\varphi) \textrm{ and } c_B(\varphi^{-1}) = -c_B(\varphi).\]
If $\Sigma$ has connected boundary, we  denote the twist coefficient simply by $c(\varphi)$. 

\begin{figure}[ht]
\labellist
\tiny \hair 2pt
\pinlabel $R$ at 139 313
\pinlabel $B$ at -1 148
\pinlabel $A_1$ at 70 0
\pinlabel $A_2$ at 135 0
\pinlabel $A_3$ at 200 0
\endlabellist
\centering
\includegraphics[width=2.9cm]{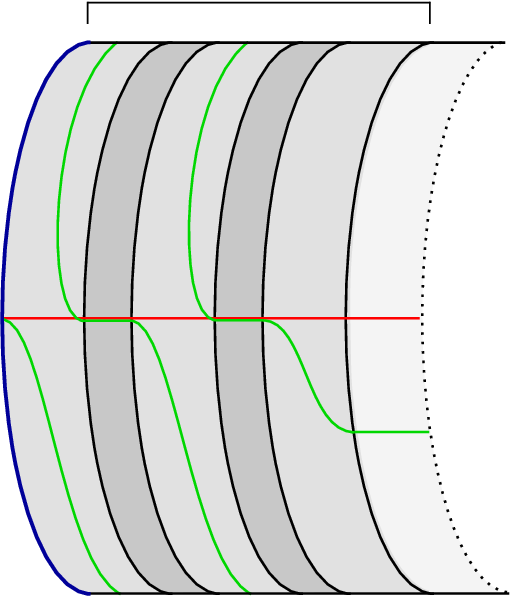}
\caption{A portion of $\Sigma$ near the multitwist region $R$ in the case $k=2$ and $\epsilon = +1$. That is, $R$ is made up of three positive twist regions, $A_1,A_2,A_3$, shaded in medium gray, together with the two fixed annuli between them, shaded in dark gray. The green arc is  the image of the red  under $\phi$. In this example, we see that $c_B(\varphi) \in (2,3)$.
}
\label{fig:stdform}
\end{figure}

Fractional Dehn twist coefficients are intimately related with the notion of right-veering, as illustrated by Lemma \ref{lem:rvpos} below. This lemma  follows mostly from the work in \cite{hkm-veering}, but appears as stated below (or in a transparently equivalent way) in \cite[Corollary 2.6]{kazez-roberts}.

\begin{lemma}\label{lem:rvpos}
Let $\varphi:\Sigma\to\Sigma$ be a homeomorphism  which restricts to the identity on $\partial \Sigma$, and let $B$ be a boundary component of $ \Sigma$. If $\varphi$ is right-veering at $B$ then $c_B(\varphi)\geq 0$. Moreover, if $c_B(\varphi)> 0$ then $\varphi$ is right-veering at $B$. \qed \end{lemma}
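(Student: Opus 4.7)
The plan is to work with a standard representative $\phi$ of $\varphi$ and to compare $\phi(a)$ with $a$ for suitable arcs $a$ ending at a point $p\in B$. Since both the fractional Dehn twist coefficient and the right-veering property at $B$ depend only on the isotopy class of $\varphi$, this reduces the lemma to a local calculation in the annular portion of $\phi$ near $B$.

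First I would dispose of the case in which $B$ does not abut any multitwist region of $\phi$: then $c_B(\varphi)=0$ by definition, so the conclusion $c_B(\varphi)\ge 0$ is automatic and the hypothesis $c_B(\varphi)>0$ never occurs. It therefore suffices to treat the case that $B$ abuts a multitwist region $R=A_1\cup\dots\cup A_{k+1}$ (together with the intervening fixed annuli) of sign $\epsilon\in\{\pm 1\}$, with $B=\{0\}\times\R/\Z\subset A_1$ and $c_B(\varphi)=\epsilon k + r$, where $r$ and $\epsilon$ share the same sign. In particular, $c_B(\varphi)>0$ iff $\epsilon=+1$, and $c_B(\varphi)<0$ iff $\epsilon=-1$.

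To establish both implications I would analyze $\varphi(a)$ for an arc $a$ whose intersection with $R$ is a single radial segment crossing $R$ transversally, with one endpoint at $p\in B$ and extended to some point of $\partial\Sigma$ outside $R$. In the annular coordinates $(q,p')\in[0,1]\times\R/\Z$ on each $A_i$, the map $\phi$ is a (partial) twist of sign $\epsilon$, and $\varphi$ differs from $\phi$ by a free isotopy whose only net effect near $B$ is to undo the boundary rotation of $\phi|_B$. A direct computation in these coordinates shows that the portion of $\varphi(a)$ inside a small collar of $B$ winds with sign $\epsilon$ relative to $a$. Hence, after isotoping $\varphi(a)$ rel $\partial\Sigma$ to minimize intersection with $a$, it emerges from $p$ to the right of $a$ when $\epsilon=+1$ and to the left when $\epsilon=-1$. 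The first case gives $\varphi(a)\ge_p a$ for every arc $a$ ending at $B$ (since the argument depends only on the local behavior of $a$ near $p$, which can always be arranged to be radial by a small isotopy), proving right-veering; the second case produces a specific arc witnessing that $\varphi$ is not right-veering, which is the contrapositive of the other implication.

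The main obstacle is making this local calculation rigorous, in particular verifying that no isotopy of $\varphi(a)$ rel $\partial\Sigma$ can cancel the winding of $\varphi(a)$ relative to $a$ around the core of $R$ near $p$. This rests on the fact that this winding number is an isotopy invariant of the pair $(\varphi, B)$ whose sign agrees with that of $c_B(\varphi)$, which in turn is the substance of the arguments of \cite{hkm-veering} repackaged in \cite[Corollary 2.6]{kazez-roberts}; in practice I would be content to cite this corollary directly rather than reproducing the calculation.
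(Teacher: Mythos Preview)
The paper does not actually prove this lemma: it is stated with a \qed\ and the surrounding text simply attributes it to \cite{hkm-veering} and \cite[Corollary~2.6]{kazez-roberts}. Your proposal ultimately lands in the same place---you explicitly say you would ``cite this corollary directly rather than reproducing the calculation''---so as a proof-by-citation it matches the paper.

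The geometric sketch you add on top of the citation is reasonable intuition but, as you yourself flag, is not a self-contained argument. In particular, the step ``after isotoping $\varphi(a)$ rel $\partial\Sigma$ to minimize intersection with $a$, it emerges from $p$ to the right of $a$ when $\epsilon=+1$'' for \emph{every} arc $a$ is exactly the content that requires the work of \cite{hkm-veering}; your parenthetical that ``the argument depends only on the local behavior of $a$ near $p$'' hides the real issue, which is that minimizing intersections is a global operation and one must check that it cannot undo the local right-turn. So your sketch should be read as motivation for the cited result rather than as an independent proof, which is consistent with how the paper treats the lemma.
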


This lemma has the immediate corollary that if $\varphi$ is neither right-veering nor left-veering, then $c_B(\varphi)=0$, as noted in Remark \ref{rmk:rvfdtc}.

\begin{lemma}\label{lem:parv}
Let $\varphi:\Sigma\to\Sigma$ be  a pseudo-Anosov homeomorphism   which restricts to the identity on a boundary component $B$. Then there exists a properly-embedded arc $a\subset \Sigma$ with   $\partial a\subset B$ such that $\varphi(a)\not\geq a$. In particular,  $\varphi$ is not right-veering at $B$. \end{lemma}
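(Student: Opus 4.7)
The plan is to construct an explicit properly embedded arc $a\subset\Sigma$ with both endpoints on $B$ such that $\varphi(a)\not\geq a$, which directly witnesses the failure of $\varphi$ to be right-veering at $B$. The construction leverages the local structure of the pseudo-Anosov foliations near $B$.

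First I would unpack the hypotheses. Since $\varphi$ is a standard pseudo-Anosov representative, it is modelled by its stable and unstable singular measured foliations $(\mathcal{F}_s,\mu_s)$ and $(\mathcal{F}_u,\mu_u)$; these meet $B$ transversally at alternating cyclic collections of prong points $p_1,\dots,p_k$ (for $\mathcal{F}_s$) and $q_1,\dots,q_k$ (for $\mathcal{F}_u$), and the perturbation procedure recalled above makes $\varphi|_B$ a rotation by $2\pi n/k$ permuting the $p_m$ by $p_m\mapsto p_{m+n}$. The hypothesis $\varphi|_B=\id$ forces this rotation to be trivial, so each prong point on $B$ is pointwise fixed by $\varphi$, and $c_B(\varphi)=0$ is automatic in this setting since no multitwist region can abut $B$ when $\varphi$ is globally pseudo-Anosov.

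Second I would build $a$ and analyze $\varphi(a)$. Fix adjacent stable prong points $p_m,p_{m+1}$ with the unstable prong point $q$ lying between them along $B$, and let $U$ be a small half-disk neighborhood of the boundary arc $B_m\subset B$ from $p_m$ to $p_{m+1}$. In singular flat coordinates on $U$ adapted to $(\mathcal{F}_s,\mathcal{F}_u)$, take $a\subset U$ to be a symmetric properly embedded arc with endpoints $x,y\in B_m$ just to each side of $q$, running into $U$ parallel to the unstable prong at $q$ before returning to $B$. Because $\varphi$ stretches along leaves of $\mathcal{F}_u$ by $\lambda>1$ and contracts transverse measure to $\mathcal{F}_u$ by $\lambda^{-1}$, and because $\varphi$ fixes $x,y,q,p_m,p_{m+1}$ pointwise, the image $\varphi(a)$ shares its endpoints with $a$, reaches further into $U$ along $\mathcal{F}_u$, and is pinched transversally closer to the prong at $q$. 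In minimal intersection position, $a\cup\varphi(a)$ cobounds a bigon near each of $x$ and $y$; the orientation conventions for $(\mathcal{F}_s,\mathcal{F}_u)$ relative to the induced orientation of $B$ will place each bigon so that $\varphi(a)$ sits to the left of $a$ at both endpoints, giving $\varphi(a)\not\geq a$.

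The main obstacle is the final geometric claim: verifying that the dynamical stretching and pinching really produce a left shift rather than a right shift or a trivial isotopy. For the left/right determination I would work in singular flat coordinates in which the pseudo-Anosov map is literally a linear hyperbolic model, to fix the orientations unambiguously; to rule out $\varphi(a)\sim a$ I would invoke the standard fact from Thurston's theory that no essential isotopy class of properly embedded arcs is preserved by a pseudo-Anosov homeomorphism, together with the multiplicative effect of $\lambda$ on the $\mathcal{F}_u$-length of $a$.
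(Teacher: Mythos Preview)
Your construction has a fatal flaw: the arc $a$ you build is boundary-parallel. You take $a\subset U$ where $U$ is a ``small half-disk neighborhood'' of the boundary arc $B_m$; such a $U$ is a disk, and any properly embedded arc in a disk with both endpoints on $B_m\subset\partial U$ cobounds a disk with a subarc of $B_m$. Since $\varphi$ fixes $B$ pointwise, $\varphi(a)$ cobounds a disk with that same subarc, and hence $\varphi(a)$ is isotopic to $a$ rel endpoints. Thus $\varphi(a)\geq a$ trivially, and your arc witnesses nothing. Your appeal to the ``standard fact'' that a pseudo-Anosov preserves no essential arc class does not help here, precisely because your $a$ is inessential. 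There is also a secondary problem even if you had chosen an essential arc: the reflection of $U$ across the unstable prong at $q$ is orientation-reversing and swaps $x$ with $y$, so by symmetry, if $\varphi(a)$ lies to the left of $a$ at $x$ it must lie to the \emph{right} of $a$ at $y$; your claim that it lies to the left at both endpoints cannot hold for a symmetric arc in a symmetric local model.

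The paper's argument is entirely different and avoids any local construction. It first passes to a power $\varphi^n$ that restricts to the identity on all of $\partial\Sigma$, observes that $c_B(\varphi^n)=nc_B(\varphi)=0$, and then simply cites \cite[Proposition~3.1]{hkm-veering}: that proposition (and its proof) produces a properly embedded arc $a$ with $\partial a\subset B$ such that $\varphi^n(a)\not\geq a$. Finally one notes that $\varphi(a)\geq a$ would force $\varphi^n(a)\geq a$ by iterating and using transitivity of $\geq$, so $\varphi(a)\not\geq a$ as well. The essential arc in HKM's proof is built from the global structure of the stable lamination, not from a small collar of $B$; any direct geometric argument along the lines you propose would have to do something similar.
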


\begin{proof}
This follows readily from \cite[Proposition 3.1]{hkm-veering}. In that proposition, however, it is assumed that the map restricts to the identity on each boundary component of the surface. This is not necessarily  the case for  the map $\varphi$  in the lemma, which need not even fix every boundary component of $\Sigma$ setwise. We remedy this by taking an appropriate power. In particular, $\varphi^n$ restricts to the identity on  $\partial \Sigma$ for some positive integer $n$, and  \[c_B(\varphi^n) = nc_B(\varphi)=0.\] Then \cite[Proposition 3.1]{hkm-veering} says that $\varphi^n$ is not right-veering at $B$. Moreover, the proof of that proposition shows that there is a properly-embedded arc $b\subset \Sigma$ with  $\partial b \subset B$ such that $\varphi^n(b) \not \geq b$. It therefore cannot be the case that \[ \varphi^n(b)  \geq \varphi^{n-1}(b) \geq \varphi^{n-2}(b) \geq \dots \geq \varphi(b) \geq b,\] which implies that $\varphi^{i+1}(b) \not\geq \varphi^i(b)$ for some $i$. Letting $a =\varphi^i(b)$, the lemma follows.
\end{proof}

\begin{lemma}\label{lem:periodicid}
If $\varphi:\Sigma\to\Sigma$ is a periodic homeomorphism of a connected surface which restricts to the identity on a boundary component $B$, then $\varphi=\id$.
\end{lemma}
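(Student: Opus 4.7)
My plan is to apply isometry rigidity: an orientation-preserving isometry of a hyperbolic surface that fixes a geodesic boundary curve pointwise must be the identity. The paper's convention in \S\ref{ssec:fdtc} specifies that a periodic homeomorphism of $\Sigma$ (with $\chi(\Sigma)<0$) is already an isometry of a hyperbolic metric for which $\partial\Sigma$ is geodesic, so the main case will follow directly from this setup. The role of the hypothesis $c_B(\varphi)=0$ is to ensure that $\varphi$ really coincides with its periodic standard form on $B$, rather than differing from it by an integer number of boundary twists that would shift the rotation on $B$; once that alignment is in place, the conclusion is extracted from a one-line linear-algebra argument.

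In the main case $\chi(\Sigma)<0$, I will pick any point $p\in B$ and observe that, since $\varphi$ fixes $B$ pointwise, the differential $d\varphi_p$ fixes the unit tangent to $B$ at $p$. Being an orientation-preserving linear isometry of the two-dimensional tangent space $T_p\Sigma$ that fixes a nonzero vector, $d\varphi_p$ must be the identity. Because an isometry of a connected Riemannian surface is uniquely determined by its value and differential at a single point, this forces $\varphi=\id$ on all of $\Sigma$.

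For the remaining low-complexity cases $\chi(\Sigma)\geq 0$ I would argue separately: if $\Sigma$ is a disk, Alexander's trick shows $\varphi$ is isotopic rel $\partial$ to $\id$; and if $\Sigma$ is an annulus, the rel-boundary mapping class group is infinite cyclic and contains no nontrivial periodic element, so $\varphi\sim\id$. The main obstacle I anticipate is the bookkeeping needed to pin down the interaction between the definition of $c_B(\varphi)$, which is expressed via the periodic standard representative, and the hypothesis that $\varphi$ itself restricts to the identity on $B$ — one must verify that the vanishing of $c_B(\varphi)$ rules out the possibility that the standard representative restricts to a nontrivial rotation of $B$, freely isotopic to $\id|_B$ only through a nonzero boundary twist. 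Once this is pinned down the rigidity step is immediate.
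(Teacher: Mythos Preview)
Your approach is different from the paper's, which simply cites \cite[Lemma~1.1]{jg} to conclude that the fixed set of $\varphi$ contains the whole surface. Your direct isometry-rigidity argument for the main case $\chi(\Sigma)<0$ is correct and self-contained: under the paper's convention the periodic map $\varphi$ \emph{is} a hyperbolic isometry, so the hypothesis $\varphi|_B=\id$ already applies to the isometry itself, and the one-line differential argument at a point of $B$ finishes it. In particular, your anticipated ``main obstacle'' is not actually an obstacle: there is no separate standard representative to reconcile with $\varphi$, and the hypothesis $c_B(\varphi)=0$ is in fact redundant once you know $\varphi$ is a periodic isometry with $\varphi|_B=\id$.

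There is, however, a small gap in your treatment of the low-complexity cases. The lemma asserts $\varphi=\id$ \emph{on the nose} (and this is what the applications to standard forms in Lemma~\ref{lem:rvstd} require), whereas Alexander's trick and the computation of the annulus mapping class group only give $\varphi\sim\id$. To get literal equality on the disk or annulus you need something like Ker\'ekj\'art\'o's theorem that a finite-order orientation-preserving homeomorphism is conjugate to a rotation; then fixing a boundary circle pointwise forces the rotation to be trivial, hence $\varphi=\id$. That said, in the paper's applications the periodic pieces arising from the Thurston decomposition always have $\chi<0$ (annular pieces are required to be fixed, and disk pieces do not occur), so your main-case argument suffices for everything the paper actually uses.
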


\begin{proof}
We note that $B$ is contained in a connected component of the fixed set of $\varphi$.  Then \cite[Lemma 1.1]{jg} says that such a component must be either (1) a connected component of $\Sigma$, (2) a closed geodesic with a two-sided collar neighborhood, (3) a geodesic arc with endpoints on $\partial\Sigma$, or (4) an isolated point.  The last three cannot contain $B$, so the corresponding component of $\operatorname{Fix}(\varphi)$ must be a component of $\Sigma$, namely $\Sigma$ itself.
\end{proof}

The following lemma is key in our proof of Theorem \ref{thm:symp}, which explains how symplectic Floer homology detects right-veering monodromy.

\begin{lemma}\label{lem:rvstd}
Let $S$ be a surface with one boundary component. Let \[h:S\to S\] be a homeomorphism   which restricts to the identity on the boundary, and let $\alpha$ be a standard  representative of $h$ with invariant set $N$. Let $S_0$ denote the closure of the component of $S\setminus N$ which abuts either $\partial S$ or a multitwist region containing $\partial S$, and let \[\alpha_0 = \alpha|_{S_0}:S_0\to S_0.\] Then $h$ is right-veering if and only if either:
\begin{enumerate}
\item \label{item:pos}$\partial S$ abuts a positive twist region for $\alpha$, or
\item \label{item:id}$\partial S\subset \partial S_0$, $\alpha_0=\id$, and every boundary  component of $S_0$ besides $\partial S$ abuts a positive twist region for $\alpha$.
\end{enumerate}
\end{lemma}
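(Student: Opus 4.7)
The plan is to prove both directions by case analysis on the structure of the standard representative $\alpha$ near $\partial S$, using Lemmas~\ref{lem:rvpos}, \ref{lem:parv}, and~\ref{lem:periodicid} together with a direct geometric analysis in the twist-region cases.

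For the ``if'' direction, assume (1). Because $h$ restricts to the identity on $\partial S$, the twist region at the $\partial S$-end of the abutting multitwist region is necessarily a full twist, and if it is positive it contributes at least $+1$ to the fractional Dehn twist coefficient. Hence $c_{\partial S}(h)\geq 1$, and Lemma~\ref{lem:rvpos} gives right-veering. If instead (2) holds, take any properly embedded arc $a\subset S$ in minimal position with respect to $N$. Either $a\subset S_0$, whence $\alpha(a)=a$, or $a$ leaves $S_0$. In the latter case, near each endpoint $p\in\partial S$ the arc travels through $S_0$ (where $\alpha=\id$) and first exits through some $\gamma\subset\partial S_0\setminus\partial S$ into a positive twist region $A$; since $\alpha$ is the identity on $\gamma$, the crossing point is fixed, and the positive Dehn twist on $A$ pushes $\alpha(a)$ to the right of $a$ in a collar of $\gamma$. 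After minimizing intersections of $\alpha(a)$ with $a$, this rightward displacement propagates back along the coincident portion in $S_0$ to give $\alpha(a)\geq_p a$.

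For the ``only if'' direction, suppose $h$ is right-veering. If $\partial S$ abuts a multitwist region, the $\partial S$-end twist is again a full twist; if it were negative then $c_{\partial S}(h)\leq -1$, contradicting Lemma~\ref{lem:rvpos}, so it is positive and (1) holds. Otherwise $\partial S\subset\partial S_0$, and we must show (2). First, since $\alpha_0$ is in standard form on $S_0$ (inherited from that of $\alpha$) with no multitwist region at $\partial S$, one has $c_{\partial S}(\alpha_0)=0$. If $\alpha_0$ were pseudo-Anosov, Lemma~\ref{lem:parv} would give a properly embedded arc $a\subset S_0\subset S$ with $\alpha(a)=\alpha_0(a)\not\geq a$, contradicting right-veering of $h$; and if $\alpha_0$ were periodic, Lemma~\ref{lem:periodicid} would force $\alpha_0=\id$. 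So $\alpha_0=\id$.

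Finally, suppose for contradiction that some boundary component $\gamma\subset\partial S_0\setminus\partial S$ does not abut a positive twist region. Since $\alpha_0=\id$ fixes $\gamma$ pointwise, the component of $N$ adjacent to $\gamma$ cannot be a flip-twist annulus (whose map exchanges its two boundary components). So either (i) $\gamma$ abuts a negative twist region $A$, or (ii) $\gamma$ is a curve component of $N$ separating $S_0$ from a non-fixed component $S'\subset S\setminus N$. In case (i), an arc entering $A$ through $\gamma$ and returning to $\partial S$ through $S_0$ is displaced to the \emph{left} of itself at its endpoint by the negative twist on $A$. In case (ii), $\alpha|_{S'}$ fixes $\gamma$ pointwise and has $c_\gamma(\alpha|_{S'})=0$; by the same analysis as for $\alpha_0$, it cannot be periodic, and pseudo-Anosovness plus Lemma~\ref{lem:parv} supplies an arc $b\subset S'$ with $\partial b\subset\gamma$ and $\alpha(b)\not\geq b$ at some $q\in\gamma$. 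Extending $b$ through $S_0$ to a properly embedded arc $a\subset S$ transports this local failure to the endpoint of $a$ on $\partial S$. In either case we contradict right-veering of $h$.

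The most delicate step will be verifying that the local displacement in a twist region adjacent to $S_0$ (or the local failure of right-veering of $\alpha|_{S'}$ at $\gamma$) genuinely propagates through the fixed region $S_0$ to the same local behavior at $\partial S$ after minimizing intersections; this requires careful tracking of orientations along the shared portion of the arc in $S_0$. A minor secondary issue is dealing with annulus or component orbits of $\alpha$ of length greater than one among regions adjacent to $S_0$, which should not affect the strategy but requires additional bookkeeping.
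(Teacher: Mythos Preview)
Your overall strategy matches the paper's: both directions proceed by case analysis using Lemmas~\ref{lem:rvpos}, \ref{lem:parv}, and~\ref{lem:periodicid}, and the core of the ``only if'' direction is the same---rule out pseudo-Anosov $\alpha_0$ via Lemma~\ref{lem:parv}, force $\alpha_0=\id$ via Lemma~\ref{lem:periodicid}, then argue each inner boundary component of $S_0$ must abut a positive twist. Two differences are worth noting.

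First, a minor inaccuracy: in case~(1) of the ``if'' direction, the twist region adjacent to $\partial S$ need not be a full twist. When $k=0$ in the description of the multitwist region abutting $\partial S$, the sole twist region is a partial one and $c(h)\in(0,1)$. Your conclusion survives, since all you actually need is $c(h)>0$; the paper simply asserts this and applies Lemma~\ref{lem:rvpos}.

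Second, for case~(2) of the ``if'' direction, the paper bypasses your ``most delicate step'' entirely. Rather than tracking arcs through $S_0$ and arguing that local rightward displacement in a twist region propagates back to $\partial S$ after minimizing intersections, it observes that $\alpha':=\alpha|_{S\setminus\operatorname{int}(S_0)}$ has positive fractional Dehn twist coefficient at every boundary component, hence is right-veering by Lemma~\ref{lem:rvpos}; then, since $S$ is obtained from $S\setminus\operatorname{int}(S_0)$ by attaching $1$-handles and $\alpha$ extends $\alpha'$ by the identity, \cite[Lemma~2.3]{hkm-veering} gives that $\alpha\sim h$ is right-veering. This packages the propagation you worry about into a citable lemma and avoids the bookkeeping you mention. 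In the ``only if'' direction the paper does use the same propagation idea once (it asserts it is ``easy to see'' that $\alpha'$ is right-veering at each $B\subset\partial S_0\setminus\partial S$ given that $h$ is and $\alpha_0=\id$), so your contrapositive version there is essentially equivalent. Your direct approach is viable, but invoking \cite[Lemma~2.3]{hkm-veering} for the ``if'' direction is cleaner and sidesteps the issue you correctly identified as the weak point.
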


\begin{proof}
Suppose that $h$ is right-veering. Then $c(h) \geq 0$ by Lemma \ref{lem:rvpos}. If $c(h)>0$, then $\partial S$ abuts a positive twist region for $\alpha$, and we are done. Let us therefore suppose  that $c(h)=0$. This implies that $\partial S \subset \partial S_0$, that  $\alpha_0|_{\partial S} = h|_{\partial S}=\id$, and that \[c_{\partial S}(\alpha_0)=c(h)=0.\]
Observe that $h$ is not   freely isotopic to a pseudo-Anosov map, by \cite[Proposition 3.1]{hkm-veering}. If  $h$ is freely isotopic to a periodic map, then Lemma \ref{lem:periodicid} implies that $\alpha_0 = \id$, as desired. There are no boundary components of $S_0$ besides $\partial S$ in this case.

Suppose, therefore, that $h$ is freely isotopic to a reducible map.  If $\alpha_0$ is pseudo-Anosov, then the fact that $c_{\partial S}(\alpha_0)=0$ implies by Lemma \ref{lem:parv}  that there is a properly-embedded arc $a\subset S_0$ with $\partial a \subset \partial S$ such that \[h(a)\sim\alpha_0(a)\not\geq a,\] which contradicts the assumption that $h$ is right-veering. Thus, $\alpha_0$ is periodic, which implies that $\alpha_0 = \id$ by Lemma \ref{lem:periodicid}. It remains to show that  every component $B$ of $\partial S_0\setminus \partial S$ abuts a positive twist region for $\alpha$. It is easy to see  that  the restriction \begin{equation*}\label{eqn:alpha'}\alpha'=\alpha|_{S\setminus \textrm{int}(S_0)}\end{equation*} is  right-veering at $B$, given that $h$ is right-veering and $\alpha_0=\id$. Thus, $c_B(\alpha') \geq 0$ by Lemma \ref{lem:rvpos}. If $c_B(\alpha')>0$, then $B$  abuts a positive twist region. Suppose, for a contradiction,  that $c_B(\alpha')=0$. Then $B$ does not abut any twist region; instead, $B$  abuts a component \[S'\subset S\setminus \textrm{int}(S_0)\] on which $\alpha'$ is either pseudo-Anosov or periodic.  In the first case,  Lemma \ref{lem:parv} says that there is a properly-embedded arc $a\subset S'$ with  $\partial a\subset B$ such that $\alpha'(a)\not \geq a$, but this contradicts the fact that $\alpha'$ is right-veering.  In the second case, Lemma \ref{lem:periodicid} implies that $\alpha'$ restricts to the identity on $S'$. But since $\alpha_0=\id$, this contradicts the minimality of the invariant set for $\alpha$: there should not be a curve abutting two regions on which $\alpha$ is the identity.

For the other direction, suppose first that item (1) of the lemma holds. Then $c(h)>0$, which implies that $h$ is right-veering by Lemma \ref{lem:rvpos}. Suppose now that item (2)  holds. Then the fractional Dehn twist coefficients of the restriction \[\alpha'=\alpha|_{S\setminus \textrm{int}(S_0)}\] are all positive. The map $\alpha'$ is thus right-veering by Lemma \ref{lem:rvpos}. Since $S$ is obtained from $S\setminus \textrm{int}(S_0)$ by attaching 1-handles, and $\alpha$ is the extension of $\alpha'$ to $S$ by the identity,  \cite[Lemma 2.3]{hkm-veering} says that $\alpha$ and therefore $h\sim \alpha$ is right-veering as well.
\end{proof}

\subsection{Symplectic Floer homology} 
\label{ssec:symp} 
Suppose  $\varphi:\Sigma\to\Sigma$ is a homeomorphism of a closed surface $\Sigma$. Let $\omega$ be an area form on $\Sigma$, and let $\phi$ be an area-preserving diffeomorphism of $\Sigma$ isotopic to $\varphi$. Assuming certain nondegeneracy and monotonicity conditions, the symplectic Floer homology $\hfs(\varphi)$ is the homology of a chain complex $\cfs(\phi)$ which is freely-generated as an $\F$-vector space  by the  fixed points of $\phi$, and whose differential counts certain pseudo-holomorphic cylinders. As indicated by the notation and proved by Seidel in  \cite{seidel-hf}, the $\F$-vector space $\hfs(\varphi)$ depends up to isomorphism only on the mapping class of $\varphi$. 

The goal of this section is to review Cotton-Clay's calculation of symplectic Floer homology in terms of standard representatives  (Theorem \ref{thm:cottonclay}). We first establish some  notation.

Let $\varphi:\Sigma\to\Sigma$ be a homeomorphism of a closed surface $\Sigma$, and let $\phi$ be a standard  representative of $\varphi$.  Let $\Sigma_0$ denote the collection of fixed components for $\phi$. Let $\Sigma_1$ be the collection of (non-fixed) periodic components, and let $\Sigma_2$ be the collection of pseudo-Anosov components.
We further divide $\Sigma_0$ into three subcollections as follows. 

Let $\Sigma_a$ be the collection of fixed components for $\phi$ which do not abut any pseudo-Anosov components. Let $\Sigma_{b,p}$ be the collection of fixed components which abut exactly one pseudo-Anosov component, at a boundary with $p$ prongs. Let $\Sigma_{b,p}^\circ$ denote the subsurface obtained from $\Sigma_{b,p}$ by removing an open disk from each component of $\Sigma_{b,p}$. Let $\Sigma_{c,q}$ be the collection of fixed components $S$ which abut at least two pseudo-Anosov components, such that the total number of prongs  meeting the boundary of $S$  is $q$. 

\begin{remark}A fixed component cannot abut a periodic component $S$; otherwise, $\phi$ would restrict to the identity on a boundary component $B$ of  $S$. This would imply by Lemma \ref{lem:periodicid} that $\phi$ is the identity on $S$, violating the minimality of the invariant set for $\phi$.
\end{remark}

We partition $\partial\Sigma_0$ into collections $\partial_\pm\Sigma_0$ of \emph{positive} and \emph{negative} components as follows. Suppose that $S\subset \Sigma_0$ is a fixed component. If a  component of $\partial S$ abuts a positive or negative twist region, then it is assigned to $\partial_\pm \Sigma_0$, respectively. If $S\subset \Sigma_{b,p}$, then the component of $\partial S$ which abuts a pseudo-Anosov component is assigned to $\partial_-\Sigma_0$. If $S\subset \Sigma_{c,q}$, then we assign at least one component of $\partial S$ which meets a pseudo-Anosov component to $\partial_-\Sigma_0$ and at least one other to $\partial_+\Sigma_0$ (and beyond that, it does not matter).

In \cite[\S4.5]{cotton-clay}, Cotton-Clay further perturbs $\phi$ to an area-preserving diffeomorphism  $\hat\phi$ of $\Sigma$ (with respect to some area form) with isolated fixed points, which agrees with $\phi$ on the invariant set. Let $\Lambda(\hat\phi|_{\Sigma_1})$ be the Lefschetz number of $\hat\phi|_{\Sigma_1}$. Let \[\cfs(\hat\phi|_{\Sigma_2})\] denote the symplectic Floer chain complex for $\hat\phi$ restricted to $\Sigma_2$, understood as the $\F$-vector space freely-generated by the fixed points of $\hat\phi|_{\Sigma_2}$ which are not contained in $\partial \Sigma_2$.

Let $n_f$ denote the number of flip-twist regions for $\phi$. 

With this setup, we are finally ready to state Cotton-Clay's formula for symplectic Floer homology \cite[Theorem 4.16]{cotton-clay}, as clarified slightly in \cite[Theorem 1.3]{ni-monodromy2}:

\begin{theorem}\label{thm:cottonclay}
Suppose that $\varphi:\Sigma\to\Sigma$ is a homeomorphism of a closed surface $\Sigma$ with $g(\Sigma)\geq 2$, and let $\phi$ be a standard representative of $\varphi$. Then we have that
\begin{align*}
\hfs(\varphi)\cong\, \,& H_*(\Sigma_a,\partial_-\Sigma_a;\F)\\
&\oplus \bigoplus_p \Big[ H_*(\Sigma_{b,p}^\circ,\partial_-\Sigma_{b,p};\F) \oplus \F^{(p-1)|\pi_0(\Sigma_{b,p})|}\Big]\\
&\oplus \bigoplus_q \Big[ H_*(\Sigma_{c,q},\partial_-\Sigma_{c,q};\F) \oplus \F^{q|\pi_0(\Sigma_{c,q})|}\Big]\\
&\oplus \F^{\Lambda(\hat\phi|_{\Sigma_1})} \oplus \F^{2n_f} \oplus \cfs(\hat\phi|_{\Sigma_2}),
\end{align*}
with respect to the notation introduced above.
\end{theorem}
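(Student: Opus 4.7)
The plan is to compute $\hfs(\varphi)$ by replacing $\phi$ with a suitably generic area-preserving perturbation $\hat\phi$ that agrees with $\phi$ on the invariant set $N$, has only nondegenerate fixed points outside $N$, and is obtained by a Hamiltonian perturbation supported in the interior of each Thurston component. On a fixed component $S\subset\Sigma_0$ I would perturb $\phi|_S=\id$ by the time-1 map of a Morse function $f_S$, chosen so that its gradient points into $S$ along each $\partial_+$ component and out along each $\partial_-$ component; this makes the fixed points on $S$ correspond to critical points of $f_S$ and turns the local Floer theory into a boundary-conditioned Morse theory. On a periodic (non-fixed) component I would first iterate to get a fixed-point set of $\hat\phi$ with a clean index count, and on a pseudo-Anosov component I would use the fact that, after Cotton-Clay's $2\pi n/k$ boundary perturbation, the fixed points in the interior are already nondegenerate and there is essentially nothing to do beyond naming them. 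The flip-twist and multitwist regions would be analyzed directly from the explicit models.

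Next I would go piece-by-piece to match the generators with the summands of the formula. Fixed components in $\Sigma_a$ contribute the critical points of $f_S$ with the Morse boundary conditions above, so their part of $\cfs(\hat\phi)$ is the Morse complex of $(S,\partial_-S)$ computing $H_*(\Sigma_a,\partial_-\Sigma_a;\F)$. For a fixed component $S\subset\Sigma_{b,p}$, the boundary adjacent to the pseudo-Anosov piece contributes $p$ extra boundary fixed points from the prong rotation; the Morse perturbation absorbs one of them into the $H_*(\Sigma_{b,p}^\circ,\partial_-\Sigma_{b,p};\F)$ summand by removing an open disk, while the remaining $p-1$ survive as free generators, explaining the $\F^{(p-1)|\pi_0(\Sigma_{b,p})|}$ term; the same combinatorics, summed over the boundary components of $S\subset\Sigma_{c,q}$, gives the $\F^{q|\pi_0(\Sigma_{c,q})|}$ term. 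Periodic components contribute fixed points whose signed count is the Lefschetz number $\Lambda(\hat\phi|_{\Sigma_1})$, and working mod 2 with a monotonicity argument shows nothing cancels in homology, giving $\F^{\Lambda(\hat\phi|_{\Sigma_1})}$. Each flip-twist region has exactly two fixed points by direct inspection of $(q,p)\mapsto(1-q,-p-f(q))$, yielding $\F^{2n_f}$. Finally, the pseudo-Anosov pieces contribute their tautologically-defined symplectic chain complex $\cfs(\hat\phi|_{\Sigma_2})$.

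The main obstacle, as in Cotton-Clay's original argument, is showing that the Floer differential respects this decomposition, i.e. that there are no holomorphic cylinders passing through the multitwist and flip-twist regions connecting fixed points on different components. The strategy I would pursue is a neck-stretching argument along the core curves of the twist annuli, combined with an energy/area estimate showing that any cylinder with endpoints on two different components of $\Sigma\setminus N$ picks up area proportional to the length of the neck; monotonicity of $\hat\phi$ then forces such cylinders to have unbounded Maslov index, ruling them out. A secondary subtlety is controlling the $p-1$ and $q$ generators created near pseudo-Anosov boundaries: one must verify that they are Floer-theoretic cycles (closed under the differential within their component), which I would do by hand in the local model near a $p$-pronged boundary, using the $2\pi n/k$ rotation to show that the relevant moduli spaces are either empty or come in cancelling pairs. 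With the decomposition in hand, the Morse-component and pseudo-Anosov-component calculations assemble into the stated formula.
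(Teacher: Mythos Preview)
This theorem is not proved in the paper at all: it is quoted verbatim from Cotton-Clay \cite[Theorem 4.16]{cotton-clay} (with a slight clarification attributed to \cite[Theorem 1.3]{ni-monodromy2}) and then used as a black box in the proof of Proposition~\ref{prop:symp}. There is therefore no ``paper's own proof'' to compare your proposal against.

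That said, your outline is a reasonable high-level summary of Cotton-Clay's actual strategy: perturb the identity on fixed components by Morse functions with gradient-flow boundary conditions dictated by $\partial_\pm$, count fixed points on the other component types directly, and then argue that Floer cylinders cannot cross the invariant annuli. A few of your heuristics are imprecise, however. The claim that on periodic components ``nothing cancels in homology'' because of monotonicity is not how the $\F^{\Lambda(\hat\phi|_{\Sigma_1})}$ term arises; one must actually arrange the perturbation so that all fixed points on $\Sigma_1$ have the same Lefschetz sign (this is possible because a non-identity isometry has isolated fixed points of index $+1$), and then check there are no differentials among them. Likewise, the neck-stretching/energy argument you sketch for ruling out cylinders crossing twist regions is not quite the mechanism Cotton-Clay uses; he works instead with a direct topological analysis of what annuli in the mapping torus can bound, together with the weak monotonicity hypothesis. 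If you intend to reproduce a proof rather than cite it, these are the places where your sketch would need to be substantially sharpened.
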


\begin{remark} \label{rem:relative-homology-surface}
Since the relative homology groups of fixed regions contribute importantly in the formula above, we remind the reader that if $S$ is a connected, oriented surface with boundary, and $\partial_- S$ is subcollection of the components of $\partial S$, then 
\[\dim H_*(S,\partial_-S;\F)=\begin{cases}
 \dim H_*(S;\F),	& \textrm{ if } \partial_-S = \emptyset \textrm{ or } \partial_-S=\partial S,\\
 \dim H_*(S;\F)-2,& \textrm{ otherwise}.
\end{cases}\]
Here ``$\dim H_*$'' refers to the total dimension of homology, rather than the dimension in a particular grading.  We will use this extensively in the proof of Theorem \ref{thm:symp}.
\end{remark}

\begin{remark}\label{rmk:sympinv}
The contributions from $\Sigma_0$, $\Sigma_1$,  $\Sigma_2$ to the formula in Theorem \ref{thm:cottonclay} do not change if we replace $\phi$ with $\phi^{-1}$. In particular, $\hfs(\varphi) \cong \hfs(\varphi^{-1})$ as ungraded $\F$-vector spaces.
\end{remark}

We end by describing the relationship between symplectic Floer homology and Heegaard Floer homology. For this, suppose  that $\varphi:\Sigma\to \Sigma$ is a homeomorphism of a closed surface $\Sigma$  with  $g(\Sigma)\geq 2$. Let $M_\varphi$ denote the mapping torus of $\varphi$. Let us define \[\hfp(M_\varphi,\mathrm{top}{-}1) = \bigoplus_{\substack{\spinc\in\spc(M_\varphi)\\\langle c_1(\spinc),[\Sigma]\rangle = 2g(\Sigma)-4}} \hfp(M_\varphi,\spinc).\] The result below   is a combination of work by Lee--Taubes \cite[Theorem 1.1]{lee-taubes} and Kutluhan--Lee--Taubes \cite[Main Theorem]{klt1}:

\begin{theorem}
\label{thm:hfsymp} Suppose that $\varphi:\Sigma\to\Sigma$ is a homeomorphism of a closed surface $\Sigma$  with $g(\Sigma)\geq 3$. Then \[\hfp(M_\varphi,\mathrm{top}{-}1) \cong \hfs(\varphi).\] 
\end{theorem}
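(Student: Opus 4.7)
The plan is to chain together three existing isomorphisms, matching up the relevant gradings/$\spc$ structures at each step. First, I would invoke the Kutluhan--Lee--Taubes identification \cite{klt1} between Heegaard Floer homology and monopole Floer homology, which gives a graded isomorphism
\[
\hfp(M_\varphi,\spinc) \;\cong\; \HMto_\bullet(M_\varphi,\spinc)
\]
for every $\spinc \in \spc(M_\varphi)$. In particular, restricting to $\spc$ structures with $\langle c_1(\spinc),[\Sigma]\rangle = 2g(\Sigma)-4$ on both sides gives
\[
\hfp(M_\varphi,\mathrm{top}{-}1) \;\cong\; \bigoplus_{\substack{\spinc \\ \langle c_1,[\Sigma]\rangle = 2g-4}} \HMto_\bullet(M_\varphi,\spinc).
\]

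Next, I would use Lee--Taubes \cite{lee-taubes}, which identifies the monopole Floer homology of a mapping torus in a fixed ``degree'' with a periodic Floer homology group $\pfh_d(\varphi)$ of the diffeomorphism. Under this correspondence, the $\spc$ structures with $\langle c_1,[\Sigma]\rangle = 2g-4$ on the monopole side match exactly the periodic Floer homology in degree $d = g(\Sigma)-2$. So one obtains
\[
\bigoplus_{\substack{\spinc \\ \langle c_1,[\Sigma]\rangle = 2g-4}} \HMto_\bullet(M_\varphi,\spinc) \;\cong\; \pfh_{g-2}(\varphi).
\]

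Finally, I would appeal to the fact that for $d$ strictly less than $g(\Sigma)-1$, the periodic Floer homology group $\pfh_d(\varphi)$ coincides with the symplectic Floer homology of (an appropriate power or shifted version of) $\varphi$ restricted to degree-$d$ multisections; in the particular slice $d = g(\Sigma)-2$ (which is legitimate precisely because $g(\Sigma) \geq 3$, so that $d \geq 1$), the only orbit sets contributing come from honest sections of the mapping torus, i.e.\ from fixed points of a generic area-preserving representative $\phi \sim \varphi$, and the periodic Floer differential reduces to the symplectic Floer differential counting the cylinders defining $\hfs(\varphi)$. This produces the identification $\pfh_{g-2}(\varphi) \cong \hfs(\varphi)$ and completes the chain. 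The hypothesis $g(\Sigma)\geq 3$ is used precisely to stay in the range where the PFH-to-symplectic Floer reduction is valid; the case $g(\Sigma)=2$ would require $d=0$, which is the somewhat degenerate ``top'' slice.

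The main obstacle is bookkeeping rather than substance: one has to verify that the isomorphisms of \cite{klt1} and \cite{lee-taubes} genuinely respect the slicing by $\langle c_1(\spinc),[\Sigma]\rangle$ and that the Lee--Taubes identification sends the specified $\spc$ summand to $\pfh_{g-2}$ with the correct sign/orientation conventions, so that after collapsing one really lands on $\hfs(\varphi)$ and not some shifted or dualized variant. Since all of the required statements appear in the cited papers, no new analytic input should be necessary.
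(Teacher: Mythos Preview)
The paper does not supply a proof of this theorem; it simply attributes the result to the combination of Lee--Taubes \cite[Theorem 1.1]{lee-taubes} and Kutluhan--Lee--Taubes \cite[Main Theorem]{klt1}, which is exactly the chain of isomorphisms you outline. So your approach matches the paper's.

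That said, your PFH bookkeeping is garbled. The $\spc$ structures with $\langle c_1(\spinc),[\Sigma]\rangle = 2g-4$ correspond to periodic Floer homology in degree $d=1$ (orbit sets meeting a fiber in a single point), not $d=g-2$; indeed, your own description of the generators as ``honest sections of the mapping torus, i.e.\ from fixed points'' is precisely the degree-$1$ picture, and for $d=g-2>1$ the orbit sets would include multi-orbit and multiply covered configurations, so the reduction to $\hfs(\varphi)$ would not go through. Relatedly, your explanation of the hypothesis $g(\Sigma)\geq 3$ is off: the identification $\pfh_1(\varphi)\cong\hfs(\varphi)$ is insensitive to $g$, and the actual role of $g\geq 3$ is that it forces $\langle c_1(\spinc),[\Sigma]\rangle = 2g-4>0$, so the relevant $\spc$ structures have non-torsion first Chern class and $\hfp$ is finite-dimensional. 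For $g=2$ one has $\langle c_1,[\Sigma]\rangle = 0$, the $\spc$ structure may be torsion, and $\hfp$ can carry an infinite $U$-tower, so the asserted isomorphism with the finite-dimensional $\hfs(\varphi)$ cannot hold as stated. These are bookkeeping issues rather than a gap in strategy, but they should be corrected.
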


We will apply this theorem in \S\ref{sec:proof} to mapping tori arising as $0$-surgery on the fibered knots $J_\pm\subset Z$ introduced in \S\ref{sec:0surg} in order to prove our main Theorem \ref{thm:main}.

\subsection{Knot Floer homology and $b$}
\label{ssec:hfk} We assume below that the reader has some familiarity with Heegaard Floer homology. Our goals in this section are primarily to establish notation and review the invariant $b$. See \cite{osz-knot, baldwin-velavick} for more background.

Suppose that $(\Sigma,\alpha,\beta,z,w)$ is a doubly-pointed Heegaard diagram for a  nullhomologous knot $K\subset Y$. Recall that the Heegaard  Floer chain complex \[\cfhat(Y)=\cfhat(\Sigma,\alpha,\beta,w)\] is the $\F$-vector space  freely-generated by  intersection points  between the associated tori  \[\T_\alpha,\T_\beta\subset\Sym^k(\Sigma),\] where $k=g(\Sigma)$. The differential  \[\partial:\cfhat(Y)\to \cfhat(Y)\] is the linear map defined on  generators $\mathbf{x}\in \T_\alpha\cap \T_\beta$ by \[\partial(\mathbf{x}) = \sum_{\mathbf{y}\in\T_{\alpha}\cap\T_{\beta}}\sum_{\substack{\phi\in\pi_2(\mathbf{x},\mathbf{y})\\\mu(\phi)=1\\n_w(\phi)=0}} \#\widehat{\mathcal{M}}(\phi)\cdot \mathbf{y},\] where $\pi_2(\mathbf{x},\mathbf{y})$ denotes the set of homotopy classes of Whitney disks from $\mathbf{x}$ to $\mathbf{y}$, $\mu(\phi)$ is  the Maslov index of $\phi$, $n_w(\phi)$ is the intersection number \[\phi\cdot \big(\{w\}\times\Sym^{k-1}(\Sigma)\big),\] and $\widehat{\mathcal{M}}(\phi)$ is the space of pseudo-holomorphic representatives of $\phi$ modulo conformal automorphisms of the domain. The chain homotopy type of this complex, and therefore the (isomorphism type of the) Heegaard Floer homology \[\hfhat(Y)=H_*(\cfhat(Y),\partial),\] is an invariant of $Y$.

Given a Seifert surface $S$ for the knot $K$, each generator $\mathbf{x}$  of the Heegaard Floer complex is assigned an Alexander grading  \[A(\mathbf{x})\in \Z\] such that for generators $\mathbf{x}$ and $\mathbf{y}$ connected by a Whitney disk $\phi\in\pi_2(\mathbf{x},\mathbf{y})$, we have   \begin{equation}\label{eqn:relalex}A(\mathbf{x})-A(\mathbf{y}) = n_z(\phi)-n_w(\phi).\end{equation} 
Let $\mathscr{F}_i$ denote the subspace of $\cfhat(Y)$ spanned by generators in Alexander grading at most $i$. The fact that $n_z(\phi) \geq 0$ when $\phi$ has a pseudo-holomorphic representative, combined with  \eqref{eqn:relalex} and the fact that  $\partial$   counts disks with $n_w(\phi)=0$, implies that 
these subspaces are in fact subcomplexes, and that they define a filtration \[\dots\subset \mathscr{F}_{n-2}\subset\mathscr{F}_{n-1} \subset \mathscr{F}_n=\cfhat(Y).\] The filtered chain homotopy type  of this complex is an invariant of $(Y,K)$ and the relative homology class $[S]\in H_2(Y,K)$. 
We denote by \[\cfkhat(Y,K,[S],i) = \mathscr{F}_i/\mathscr{F}_{i-1}\]  the direct summand of the associated graded complex in Alexander grading $i$, and by \[\hfkhat(Y,K,[S],i) = H_*(\cfkhat(Y,K,[S],i))\] the resulting knot Floer homology group in Alexander grading $i$. Recall that \begin{equation}\label{eqn:vanish}\hfkhat(Y,K,[S],i) = 0 \textrm{ for }|i|>g(S).\end{equation} Letting   \[\hfkhat(Y,K,[S]) = \bigoplus_i\hfkhat(Y,K,[S],i),\]   it follows  that the filtration above gives rise to a  spectral sequence  with \[E_1\cong \hfkhat(Y,K,[S])\textrm{ and } E_\infty \cong  \hfhat(Y)\] whose $d_1$ differential is a sum over integers $i$ of maps of the form \[d_1:\hfkhat(Y,K,[S],i) \to \hfkhat(Y,K,[S],i-1).\] 
The chain complexes  above (and thus the corresponding homology groups) split as  direct sums of complexes over $\spc$ structures on $Y$. Given $\spinc \in\spc(Y)$, we denote by  \[\cfhat(Y,\spinc), \,\hfhat(Y,\spinc),\, \cfkhat(Y,K,[S],\spinc,i),\, \hfkhat(Y,K,[S],\spinc,i)\] the corresponding $\spc$ summands.

\begin{remark}\label{rmk:tau}
For a knot $K\subset S^3$, we have that \[E_\infty \cong  \hfhat(S^3)\cong \F.\] The tau invariant $\tau(K)\in\Z$ is   the  Alexander grading of the generator of this  page.
\end{remark}

\begin{remark}
We will   omit the Seifert surface $S$ from the notation  above where the class $[S]$  is implicit, as in the case of a fibered knot or a knot in a rational homology sphere.
\end{remark}

\begin{remark}\label{rmk:thin}
The knot Floer homology of a knot $K\subset S^3$ is bigraded, \[\hfkhat(S^3,K) = \bigoplus_{m,i} \hfkhat_m(S^3,K,i),\] where $m\in \Z$ denotes the Maslov grading. The knot Floer homology is  \emph{thin} if it is supported in bigradings $(m,i)$ with $m-i$  constant. The fact that the differential $\partial$ shifts Maslov grading by $-1$ implies that the spectral sequence \[E_1\cong \hfkhat(S^3,K)\implies \hfhat(S^3)\cong E_\infty\] collapses at the $E_2$ page when the knot Floer homology of $K$ is thin.
\end{remark}

Suppose now that $K\subset Y$ is a fibered knot of genus $g$.  Then
\begin{equation}\label{eqn:rankone}
\hfkhat(\pm Y,K,g) \cong \hfkhat(\pm Y, K,-g) \cong \F.
\end{equation}
Moreover, if $\spinc$ is the $\spc$ structure on $Y$  associated with  the fibration of $K$ (by which we mean the $\spc$ structure associated with the contact structure on $Y$ supported by $K$), then
\begin{align}\label{eqn:hfkf}\hfkhat(Y,K,\spinc, g)& \cong \F,\\
\label{eqn:hfkzero}\hfkhat(Y,K,\spinc', g)&=0, \textrm{ for } \spinc'\neq \spinc.
\end{align}   Note that the combination of \eqref{eqn:vanish} and \eqref{eqn:rankone}  implies that the  filtration of $\cfhat(-Y)$ associated with  $K\subset -Y$ is filtered chain homotopy equivalent to a filtration of the form \[ 0 = \mathscr{F}_{-1-g}\,\subset\,\F\langle\mathbf{c}\rangle = \mathscr{F}_{-g} \,\subset\, \mathscr{F}_{1-g}\, \subset\, \cdots \,\subset \mathscr{F}_{g} = \cfhat(-Y),\] as mentioned in the introduction. As in that section, we define the invariant \[b(K)=b(K\subset Y)\in\mathbb{N}\cup \{\infty\}\] to be either \[g + \min\{k \mid [\mathbf{c}] = 0 \textrm{ in } H_*(\mathscr{F}_k)\},\]  if $[\mathbf{c}] = 0 \textrm{ in } \hfhat(-Y),$ or $\infty$ otherwise. The spectral sequence interpretation of $b$ in Remark \ref{rmk:alternative} then follows readily from its definition and the discussion above.

Theorem \ref{thm:bvv} is equivalent to the statement that $b(K)=1$ when the monodromy of $K$ is not right-veering. As mentioned in the introduction, this was proved by Baldwin--Vela-Vick via a Heegaard-diagrammatic approach, but  it is not clear how to prove  our main Theorem \ref{thm:main} by a similarly direct strategy. We elaborate on this point below.

The   idea behind Baldwin--Vela-Vick's proof of Theorem \ref{thm:bvv} is roughly the following: suppose that the monodromy $h:S\to S$ of $K$ is not right-veering. Then there is some \emph{basis} arc  $a \subset S$ which is not sent to the right by $h$. This arc and its image $h(a)$ can be used to define attaching curves in a doubly-pointed Heegaard diagram for $K\subset -Y$. The fact that $h$ does not send $a$ to the right is used to find  a generator  \[\mathbf{d}\in\cfhat(-Y),\] in Alexander grading $1-g,$ such that the  sole contribution to $\partial \mathbf{d}$ is a pseudo-holomorphic disk with domain given by a bigon from $\mathbf{d}$ to $\mathbf{c}$. This proves that $b(K)=1$.

One might hope to prove the converse (our Theorem \ref{thm:main}) by similar diagrammatic means. Most naively, given a doubly-pointed Heegaard diagram for $K$ adapted to the open book $(S,h)$ and a basis of arcs on $S$, one might hope that   $b(K) = 1$ implies that there is a bigon from a generator $\mathbf{d}$ as above to $\mathbf{c}$, certifying that at least one of the basis arcs is not sent to the right. However, this naive strategy fails for the reason that one can find a surface $S$, a monodromy $h:S\to S$, and a basis of arcs on $S$ such that $h$ is not right-veering but nevertheless moves every arc in the basis to the right, as illustrated in Figure \ref{fig:nonrvright} below.

\begin{figure}[ht]
\labellist
\small \hair 2pt
\pinlabel $S$ at 53 48

\tiny
\pinlabel $x$ at 142 7
\pinlabel $y$ at 7 142
\pinlabel $a$ at 7 89
\pinlabel $b$ at 7 166

\pinlabel $h(a)$ at 358 102
\pinlabel $h(b)$ at 358 60

\endlabellist
\centering
\includegraphics[width=8cm]{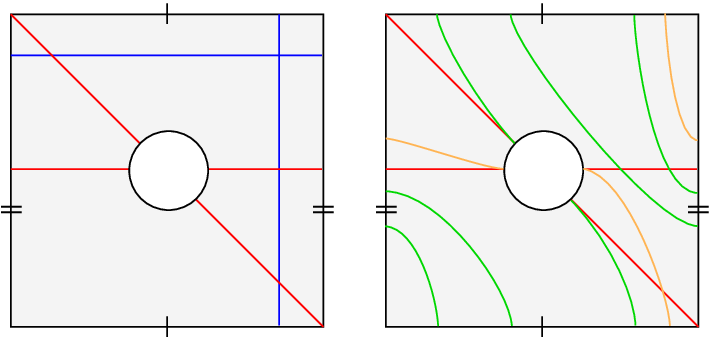}
\caption{The composition  $h=D_x \circ D_{y}^{-1}$ of a right-handed Dehn twist about $x$ with a left-handed Dehn twist about $y$ is a non-right-veering homeomorphism of the once-punctured torus $S$. The arcs $a$ and $b$ form a basis for $S$, and are both moved to the right by $h$.
}
\label{fig:nonrvright}
\end{figure}

What our Theorem \ref{thm:main} ultimately shows of course is that there is \emph{some} basis of arcs for which $b(K)=1$ guarantees the existence of a  bigon as above, but it is not at all clear to us how to find such a basis by  diagrammatic means.

\subsection{The infinity knot Floer complex}\label{ssec:infty}
We end this section with a  review of the $\cfkinfty$ version of the knot Floer complex, which we will use extensively in \S\ref{sec:0surg}.

Given a doubly-pointed Heegaard diagram for a nullhomologous knot $K\subset Y$ with Seifert surface $S$ as in the previous section, the chain complex \[\mathscr{C}=\cfkinfty(Y,K,[S])\] is generated as a vector space over $\F$ by triples  $[\mathbf{x},i,j]$, with $\mathbf{x}\in \T_\alpha\cap \T_\beta$ and $(i,j)\in \Z\oplus \Z$, satisfying \[A(\mathbf{x}) = j-i.\] This complex has the structure of an $\F[U]$-module, where multiplication by $U$ acts as \[U\cdot [\mathbf{x},i,j] =[\mathbf{x},i-1,j-1].\] The differential \[\delta:\mathscr{C}\to\mathscr{C} \] is the $\F[U]$-module map defined on generators by \[\delta([\mathbf{x},i,j]) = \sum_{\mathbf{y}\in\T_{\alpha}\cap\T_{\beta}}\sum_{\substack{\phi\in\pi_2(\mathbf{x},\mathbf{y})\\\mu(\phi)=1}} \#\widehat{\mathcal{M}}(\phi)\cdot [\mathbf{y},i-n_w(\phi),j-n_z(\phi)].\]  The complex $(\mathscr{C},\delta)$ is   $(\Z\oplus\Z)$-filtered  with respect to the  grading which assigns to a generator $[\mathbf{x},i,j]$ the pair $(i,j)$, once again by the nonnegativity of $n_z(\phi)$ and $n_w(\phi)$ for disks $\phi$  which admit pseudo-holomorphic representatives. In particular,  $\delta$ is a sum of maps \[\delta = \sum \delta_{mn}\] over pairs  of nonnegative integers, where $\delta_{mn}$ is the component of $\delta$ which lowers the grading by $(m,n).$
As before, the  filtered chain homotopy type  is an invariant of $(Y,K,[S])$, and this complex  splits as a direct sum of   complexes over $\spc$ structures on $Y$. We will denote by $\mathscr{C}_\spinc$  the  summand  corresponding to  $\spinc\in\spc(Y)$.  

Given $(i,j)\in \Z\oplus\Z$, let  $\mathscr{C}_\spinc(i,j)$  be the  subspace of $\mathscr{C}_\spinc$ spanned by generators of the form $[\mathbf{x},i,j]$.
Then the component $\delta_{mn}$ of $\delta$ restricts to a sum of maps of the form \[\delta_{mn}: \mathscr{C}_\spinc(i,j)\to \mathscr{C}_\spinc(i-m,j-n)\] over pairs of integers $(i,j)$. More generally, given a subset $X\subset \Z\oplus \Z$, we define \[\mathscr{C}_\spinc X = \bigoplus_{(i,j)\in X}\mathscr{C}_\spinc(i,j).\] The differential $\delta$ induces an endomorphism of $\mathscr{C}_\spinc X$ which may or may not be a differential. For example, $\mathscr{C}_\spinc(i,j)$ is naturally a chain complex with respect to the induced map $\delta_{00}$, and there is a canonical isomorphism of this complex with the knot Floer complex  above, \[\mathscr{C}_\spinc(i,j)\cong \cfkhat(Y,K,[S],\spinc,j-i).\] Moreover, for each $k\in \Z$, the induced endomorphism \[\delta_{00}+\delta_{01} + \delta_{02} + \dots\] on $\mathscr{C} \{i=k\}$ is a differential which is filtered by the $j$-coordinate, and this filtered complex is isomorphic to $\cfhat(Y)$ with its filtration induced by $K$ and $[S]$ as  above. The same is true of the complex $\mathscr{C} \{j=k\}$ as filtered by the $i$-coordinate.

In practice, we will use the \emph{reduced model} for $\cfkinfty(Y,K,[S])$. This is the $(\Z\oplus\Z)$-filtered chain complex $(C,d)$ over $\F[U]$, where \[ C = H_*(\mathscr{C},\delta_{00})\] is obtained by taking homology with respect to $\delta_{00}$, and $d$ is the induced differential on $C$. Extending the notational conventions above in the obvious way, we have that \[C_\spinc(i,j) \cong \hfkhat(Y,K,[S],\spinc,j-i)\] for each $\spinc\in\spc(Y)$ and $(i,j)\in \Z\oplus \Z$, and \[d=\sum d_{mn}\] is a sum of maps over pairs $(m,n)$ of nonnegative integers which are not both equal to zero, where each component $d_{mn}$ restricts to a map \[d_{mn}:C_\spinc(i,j) \to C_\spinc(i-m,j-n)\] for every  $(i,j)$. In addition, multiplication by $U$ is a map \[U:C_\spinc(i,j) \to C_\spinc(i-1,j-1).\] This reduced complex $(C,d)$ is $(\Z\oplus\Z)$-filtered chain homotopy equivalent to $(\mathscr{C},\delta)$. In particular, for each $k\in \Z$, the  complex $C\{i=k\}$ with filtration induced by the $j$-coordinate, which as a vector space is given by \[C\{i=k\}\,\cong\,\bigoplus_{j\in \Z}\hfkhat(Y,K,[S],j-k), \] is filtered chain homotopy equivalent to $\cfhat(Y)$ with the filtration induced by $K$ and $[S]$ as above. Moreover, the restriction of  \[d_{01} = (\partial_{01})_*\]  to $C\{i=k\}$ is a sum over integers $j$ of maps of the form \[d_{01}:\hfkhat(Y,K,[S],j-k)\to \hfkhat(Y,K,[S],j-k-1),\] and agrees with the $d_1$ differential of the spectral sequence  \[\hfkhat(Y,K,[S])\implies \hfhat(Y).\]  The same holds for $C\{j=k\}$ and $d_{10}$.

\section{The Heegaard Floer homology of $0$-surgery}\label{sec:0surg}

The goal of this section is to prove Proposition \ref{prop:yi} below. As outlined in the introduction, this result is a  step in the proof of Theorem \ref{thm:main}, which we will complete in the next section. We first establish some notation that will be used in this section and the next.

Let $\spinc_+$ be a nontorsion $\spc$ structure on $S^1\times S^2$. Work of Eliashberg \cite{eliashberg-ot} implies that there is a contact structure $\xi$ with  $\spinc_\xi=\spinc_+$. Let $L\subset S^1\times S^2$ be a fibered knot supporting  $\xi$, with fiber $G$. Let $L_+\subset S^1\times S^2$ be the $(3,3n+1)$-cable of $L$ for $n\geq 1$. This cable is naturally fibered, with fiber given by \[F = T \cup G_1 \cup G_2\cup G_3,\] where $T$ is a genus-$3n$ surface with four boundary components, and  the $G_i$  are copies of $G$. In particular, \[g':=g(L_+) = 3g(L)+3n \geq 3.\]  Since $L_+$ is a positive cable, its fibration also represents the $\spc$ structure $\spinc_+$ by \cite[Corollary 1.12]{bev-cabling}. Let  $g_+:F \to F$ denote the monodromy of $L_+$. Then $g_+$ is reducible: it restricts to $T$ as a periodic map of period $9n+3$, and cyclically permutes the $G_i$. 

Note that  $L_+ \subset -(S^1\times S^2)$ has monodromy $g_-:F \to F$ given by the inverse of $g_+$. Its fibration also represents $\mathfrak{s}_+$. For notational convenience, let $L_-\subset S^1\times S^2$ be the image of this knot under an orientation-reversing homeomorphism of $S^1\times S^2$, and let $\spinc_-$ denote the pullback of $\spinc_+$ under this homeomorphism. We will refer to $L_-$ as the ``mirror" of $L_+$.

\begin{lemma}
\label{lem:L2} The fractional Dehn twist coefficients of $g_\pm$ are given by \[c(g_\pm) =   \pm1/(9n+3).\] 
\end{lemma}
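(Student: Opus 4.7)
\emph{Proof plan.} Since $g_-=g_+^{-1}$, the identity $c_B(\varphi^{-1})=-c_B(\varphi)$ noted in \S\ref{ssec:fdtc} reduces the lemma to showing that $c(g_+)=1/(9n+3)$. The strategy is to build a standard Nielsen--Thurston representative $\phi_+$ of $g_+$, read off the rotation that its periodic piece induces on $\partial F$, and translate this into the fractional Dehn twist coefficient via the description of multitwist regions from \S\ref{ssec:fdtc}.

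The reducing system for $\phi_+$ is the obvious one: the three simple closed curves in $F$ along which the $G_i$ are glued to $T$. With respect to this decomposition, $\phi_+$ cyclically permutes $G_1,G_2,G_3$ and restricts to $T$ as the Milnor monodromy of the $(3,3n+1)$-torus knot $T_{3,3n+1}\subset S^3$, with three small invariant disks removed (one for each $G_i$). This identification is forced by the cabling construction: the part of $F$ lying inside $\nu(L)$ is isotopic to the intersection of the Milnor fiber of $T_{3,3n+1}$ with one of the two standard solid tori of $S^3$, and the Seifert framings used to set up the cable and the standard solid-torus decomposition of $S^3$ agree. Parameterizing $T_{3,3n+1}=\{(z,w)\in S^3 : z^3=w^{3n+1}\}$ by $\theta\in\R/\Z$ via $\theta\mapsto(re^{2\pi i(3n+1)\theta},\,se^{2\pi i\cdot 3\theta})$ with $r^3=s^{3n+1}$, and taking the Milnor monodromy $(z,w)\mapsto(e^{2\pi i/3}z,\,e^{2\pi i/(3n+1)}w)$, one checks directly that this monodromy acts on $T_{3,3n+1}$ by $\theta\mapsto\theta+1/(9n+3)$. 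Thus $\phi_+|_T$ is periodic of order $9n+3$ and rotates the outer boundary component $\partial F\subset\partial T$ by exactly $2\pi/(9n+3)$ in the positive direction.

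Since $g_+$ restricts to the identity on $\partial F$, a standard representative must insert between $\partial F$ and the periodic region $T$ a single partial-twist region (a multitwist region with $k=0$) whose twist amount $r$ interpolates between the identity on $\partial F$ and the boundary rotation of $\phi_+|_T$. By the calculation above, $r=1/(9n+3)$, so $c(g_+)=1/(9n+3)$, which yields $c(g_-)=-1/(9n+3)$ as well. The main technical point in this plan is aligning the boundary parameterization of $T_{3,3n+1}$ in $S^3$ with that of $\partial F=L_+$ in $S^1\times S^2$; this alignment is governed by the Seifert framing used throughout the cabling construction, so the rotation computed in $S^3$ is the rotation seen by the fractional Dehn twist coefficient of $g_+$.
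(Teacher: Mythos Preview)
Your approach is correct in outline and takes a genuinely different route from the paper's. The paper dispatches the lemma in one line by citing \cite[Proposition~4.2]{kazez-roberts} (with \cite[Lemma~4.2]{ni-monodromy} as a secondary reference), which says that the $(p,q)$-cable of any fibered knot has fractional Dehn twist coefficient $1/(pq)$; here $(p,q)=(3,3n+1)$ gives $1/(9n+3)$, and since the argument in \cite{kazez-roberts} is local to the cabling solid torus it transfers verbatim from $S^3$ to $S^1\times S^2$. Your plan amounts to re-deriving this special case from scratch: you build the periodic representative directly from the Milnor fibration of $z^3=w^{3n+1}$, compute its boundary rotation explicitly, and convert that into the twist amount $r$ in the collar multitwist region. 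This is more self-contained but longer, and the framing/orientation matching you flag at the end---aligning the Seifert longitude of $L_+$ in $S^1\times S^2$ with the $\theta$-parameterization of $T_{3,3n+1}$ in $S^3$, and verifying that the rotation is in the \emph{positive} sense relative to the boundary orientation of $F$---is precisely the substance of the cited Kazez--Roberts computation. Either route works; the citation is cleaner, while your explicit calculation has the virtue of making the periodic structure of $g_+|_T$ (which the paper only asserts in passing) transparent.
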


\begin{proof}
It is shown in \cite[Proposition 4.2]{kazez-roberts} that the $(p,q)$-cable of a fibered knot, for $p$ and $q$  relatively prime and $|p|>1$, has  fractional Dehn twist coefficient $1/pq$. This is stated there for cables of fibered knots in $S^3$, but the proof is local and applies to cables of fibered knots in any 3-manifold; see also \cite[Lemma 4.2]{ni-monodromy}. 
\end{proof}

The reason we ultimately consider cables is for the following lemma, which follows from work of Hedden \cite{hedden-cabling} and is a key input for Proposition \ref{prop:yi}.

\begin{lemma}
For $n$ sufficiently large, \[\hfkhat(S^1\times S^2, L_\pm,g'-2) = 0,\] and  \[\hfkhat(S^1\times S^2, L_\pm,g'-1) \cong \F \]   is supported in the $\spc$ structures $\spinc_+$ and $\spinc_-$, respectively.
\label{lem:L}
\end{lemma}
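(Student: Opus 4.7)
The plan is to invoke Hedden's cabling formula \cite{hedden-cabling}, which computes the knot Floer homology of a $(p,pn+1)$-cable in terms of that of its companion in the top Alexander gradings, combined with the classical description of the knot Floer homology of torus knots. Although Hedden's theorem is stated for knots in $S^3$, cabling is a local operation inside a tubular neighborhood of $L$, so the argument adapts to $L \subset S^1\times S^2$; the relevant $\spc$ structures may be tracked using the local cabling data.

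Specifically, for a fibered knot $K$, so $\hfkhat(K,g(K))\cong\F$, and for $n$ sufficiently large relative to $g(K)$, the top few Alexander gradings of the $(3,3n+1)$-cable satisfy
\[
\dim\hfkhat(K_{3,3n+1},g'-i)=\dim\hfkhat(T_{3,3n+1},3n-i)
\]
for all $0\le i\le i_0(n)$, where $i_0(n)\to\infty$ as $n\to\infty$. Applied to $K=L$ and $i\in\{0,1,2\}$, this reduces the computation of $\hfkhat(L_+,g'-1)$ and $\hfkhat(L_+,g'-2)$ to the analogous computations for the torus knot $T_{3,3n+1}$.

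Since torus knots are L-space knots, the ranks of $\hfkhat(T_{3,3n+1},j)$ equal the absolute values of the coefficients of the Alexander polynomial
\[
\Delta_{T_{3,3n+1}}(t)=\frac{(1-t^{3(3n+1)})(1-t)}{(1-t^3)(1-t^{3n+1})}.
\]
A short computation, or equivalently inspection of the numerical semigroup $\langle 3,3n+1\rangle$ near its Frobenius number $6n-1$, shows that for every $n\ge 1$ the top three Alexander gradings carry ranks $1,1,0$, starting from the top grading $3n$. Combining with Hedden's formula yields $\hfkhat(L_+,g'-1)\cong\F$ and $\hfkhat(L_+,g'-2)=0$.

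For the $\spc$-structure claim: $L_+$ is fibered of genus $g'$, so the Ghiggini--Ni fibered detection places $\hfkhat(L_+,g')\cong\F$ in the $\spc$ structure $\spinc_0$ of the contact structure $L_+$ supports, which equals $\spinc_\xi=\spinc_0$ for $n$ sufficiently large (since for such $n$ the open book of $L_+$ is obtained from that of $L$ by a positive cabling operation that preserves the supported contact structure up to stabilization). The generator of $\hfkhat(L_+,g'-1)$ produced by Hedden's formula comes from the local torus-knot data, and so it too lies in $\spinc_0$. The statement for $L_-$ follows from the conjugation symmetry of knot Floer homology under orientation reversal, which sends $\spinc_0$ to $\bar\spinc_0$. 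The main technical point is to verify that Hedden's cabling formula and the attached $\spc$-structure bookkeeping go through cleanly in $S^1\times S^2$; this is a routine adaptation, since cabling remains a local operation, but some care is needed because $H_1(S^1\times S^2)\neq 0$.
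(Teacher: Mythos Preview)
Your approach gets the dimensions right but takes a detour through torus knot computations, and this detour leaves a gap when you turn to the $\spc$-structure claim.

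The paper's proof is more direct. Adapting the proof of Hedden's Lemma~3.6, one obtains for $n$ large a doubly-pointed Heegaard diagram for $L_+$ in which there is an identification of chain complexes
\[
\cfkhat(S^1\times S^2,L_+,\spinc,g'-1)\ \cong\ \cfkhat(S^1\times S^2,L,\spinc,g)
\]
for \emph{each} $\spinc\in\spc(S^1\times S^2)$, and no generators whatsoever in Alexander grading $g'-2$. Since $\hfkhat(S^1\times S^2,L,g)\cong\F$ is supported in $\spinc_0$ (as $L$ is fibered with associated $\spc$ structure $\spinc_0$), so is $\hfkhat(L_+,g'-1)$. The $\spc$ bookkeeping comes for free from the diagram; no separate argument about which contact structure $L_+$ supports is needed. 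The statement for $L_-$ then follows from the mirror symmetry $\hfkhat(S^1\times S^2,L_+,\spinc,j)\cong\hfkhat(S^1\times S^2,L_-,\bar\spinc,j)$.

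Your route instead compares dimensions with those of $\hfkhat(T_{3,3n+1})$ and tries to recover the $\spc$ information afterward. Two issues arise. First, the displayed formula with $i_0(n)\to\infty$ is not what Hedden proves and is false in that generality: once $i$ is large enough to reach lower Alexander gradings of the companion, the cable's ranks depend on $\hfkhat(K,g-1),\hfkhat(K,g-2),\dots$ and need not follow the torus knot's $1,1,0$ pattern. For $i\le 2$ and fibered $K$ the numbers happen to agree, so this is not fatal for the dimension count, but it misstates the input. Second, and more seriously, the assertion that ``the generator of $\hfkhat(L_+,g'-1)$ comes from the local torus-knot data, and so it too lies in $\spinc_0$'' is not an argument: the torus knot lives in $S^3$, which has a unique $\spc$ structure, so ``local torus-knot data'' by itself carries no $\spc$ information about $S^1\times S^2$. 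Likewise, cabling is not positive stabilization, so the parenthetical about the open book of $L_+$ does not establish that $L_+$ supports $\xi$. To pin down the $\spc$ structure you need precisely the $\spc$-refined Heegaard-diagrammatic identification with $\hfkhat(L,\spinc,g)$ that the paper uses---at which point the torus knot detour becomes redundant.
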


\begin{proof} 
A slight adaptation of the proof of \cite[Lemma 3.6]{hedden-cabling} shows that for $n$ sufficiently large, we have that \[\hfkhat(S^1\times S^2, L_+,\spinc,g'-1) \cong \hfkhat(S^1\times S^2, L,\spinc,g)\] for each $\spinc\in\spc(S^1\times S^2)$, and that \[\hfkhat(S^1\times S^2, L_+,g'-2)=0.\] In particular, there is a doubly-pointed Heegaard diagram for the cable $L_+$ such that  there is an isomorphism of \emph{chain complexes}, \[\cfkhat(S^1\times S^2, L_+,\spinc,g'-1) \cong \cfkhat(S^1\times S^2, L,\spinc,g)\] for each $\spinc$, and for which there are no generators  in Alexander grading $g'-2$. Since \[\hfkhat(S^1\times S^2, L,g)\cong \F\] is supported in the $\spc$ structure $\spinc_+$ by \eqref{eqn:hfkf}-\eqref{eqn:hfkzero}, the result for $L_+$ follows. 
The lemma then follows for $L_-$ from the symmetry  \[\hfkhat(S^1\times S^2,L_+,\spinc,j)\cong \hfkhat(-S^1\times S^2,L_+,\spinc,j),\] which holds for each Alexander grading $j$ and each $\spinc\in\spc(S^1\times S^2)$ \cite[\S3]{osz-knot}.
\end{proof}

\begin{remark}
We will  hereafter assume that  $n$ is  large enough that the conclusion of Lemma \ref{lem:L}  holds.
\end{remark}

The rest of this section is devoted to proving Proposition \ref{prop:yi} below. Our proof is inspired by  the proofs of \cite[Proposition 3.1]{ni-monodromy} and \cite[Proposition 4.1]{ni-monodromy2}.

\begin{proposition}
\label{prop:yi} Suppose $K\subset Y$ is a nontrivial fibered knot of genus $g$ satisfying \[b(K\subset Y) = b(K\subset -Y)=1,\] and let \[J_\pm=K\#L_\pm \textrm{ and }Z = Y\#(S^1\times S^2).\] Then \[\dim\hfp(Z_0(J_\pm),\mathrm{top}{-}1)  =\dim\hfkhat(Y,K,g-1)-1.\] 
\end{proposition}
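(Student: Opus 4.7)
\textbf{Proof proposal for Proposition \ref{prop:yi}.}

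The plan is to apply the Ozsv{\'a}th--Szab{\'o} $0$-surgery formula to the fibered knot $J_\pm = K\#L_\pm$ of genus $G = g+g'$ in $Z$, exploiting the connect-sum decomposition and the explicit structure of $L_\pm$ near top Alexander grading provided by Lemma~\ref{lem:L}. The overall strategy will mirror the proofs of Proposition~3.1 of \cite{ni-monodromy} and Proposition~4.1 of \cite{ni-monodromy2}, where $0$-surgery on a connected sum with a carefully chosen cable was analyzed in the same way.

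First, I would invoke the K{\"u}nneth formula for the (reduced) $\cfkinfty$ complex to identify $\cfkinfty(Z, J_\pm)$ with the filtered tensor product of $\cfkinfty(Y,K)$ and $\cfkinfty(S^1\times S^2, L_\pm)$, so that both Alexander gradings and $\spc$ summands add. From Lemma~\ref{lem:L} together with the fact that $L_\pm$ is fibered of genus $g'$, the knot Floer homology of $L_\pm$ in the nontorsion $\spc$ structure $\spinc_0$ (resp.\ $\bar\spinc_0$) has rank $1$ in Alexander gradings $g'$ and $g'-1$ and rank $0$ in grading $g'-2$. Combined with $\hfkhat(Y,K,g)\cong\F$, this gives, in the $\spc$ summand $\mathfrak{s}_\pm$ on $Z$ supported by the open book of $J_\pm$,
\[
\hfkhat(Z,J_\pm,\mathfrak{s}_\pm,G) \cong \F,\qquad
\hfkhat(Z,J_\pm,\mathfrak{s}_\pm,G-1) \cong \F \oplus \hfkhat(Y,K,g-1),
\]
and $\hfkhat(Z,J_\pm,\mathfrak{s}_\pm,G-2) \cong \hfkhat(Y,K,g-1)\oplus \hfkhat(Y,K,g-2)$.

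Next, I would invoke the Ozsv{\'a}th--Szab{\'o} $0$-surgery mapping-cone formula to express $\hfp(Z_0(J_\pm),\mathrm{top}{-}1)$ as the homology of an explicit subquotient of $\cfkinfty(Z,J_\pm)$ built from the groups $A_s^+$, $B^+$ and the maps $v_s,h_s$, restricted to the $\spc$ structure on $Z_0(J_\pm)$ with $\langle c_1, [\hat F]\rangle = 2(G-2)$. The choice of a nontorsion $\spinc_0$ for $L$, and the resulting splitting of $\spc(Z_0(J_\pm))$, will be crucial here so that this formula isolates exactly the nontorsion summand coming from $\mathfrak{s}_\pm$. Concretely, the resulting group will be a subquotient whose dimension is $\dim\hfkhat(Z,J_\pm,\mathfrak{s}_\pm,G-1)$ minus contributions from the $d_1$ spectral-sequence differentials arriving at and leaving from Alexander grading $G-1$ in $\cfkinfty(Z,J_\pm)$.

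Third, I would translate the hypotheses. By Remark~\ref{rmk:alternative}, $b(K\subset Y)=1$ is equivalent to nonvanishing of the $d_1$ differential
\[
d_1 \colon \hfkhat(Y,K,g) \longrightarrow \hfkhat(Y,K,g-1),
\]
and $b(K\subset -Y)=1$ is equivalent, after orientation reversal, to nonvanishing of the horizontal analogue on $\cfkinfty(Y,K)$. Since $\hfkhat(Y,K,\pm g)\cong\F$, each of these differentials has rank exactly $1$. Passing to the tensor product, each nontrivial $d_1$ on $\cfkinfty(Y,K)$ induces a nontrivial $d_1$ on $\cfkinfty(Z,J_\pm)$ hitting or leaving the $\hfkhat(Z,J_\pm,\mathfrak{s}_\pm,G-1)$ summand, via the rank-one factors of $\hfkhat(S^1\times S^2, L_\pm)$ in gradings $g'$ and $g'-1$ identified above. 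The two resulting reductions are independent: one kills the $\F$ coming from $\hfkhat(Y,K,g)\otimes\hfkhat(S^1\times S^2,L_\pm,g'-1)$, and the other cuts the dimension of the $\hfkhat(Y,K,g-1)$ summand by one. Combining, the dimension drops by $2$, yielding
\[
\dim\hfp(Z_0(J_\pm),\mathrm{top}{-}1) = \bigl(1+\dim\hfkhat(Y,K,g-1)\bigr) - 2 = \dim\hfkhat(Y,K,g-1) - 1.
\]

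The main obstacle will be to identify the precise subquotient of $\cfkinfty(Z,J_\pm)$ produced by the $0$-surgery formula at the $\mathrm{top}{-}1$ $\spc$ structure and to verify that, after the K{\"u}nneth simplification, the only contributions to its Euler characteristic/rank beyond those from $\hfkhat(Y,K,g-1)$ are the two predicted rank-$1$ pieces killed by the $b$-hypotheses. Careful bookkeeping of $\spc$ structures on $Z_0(J_\pm)$, using that $\spinc_0$ is nontorsion so that the relevant mapping-cone pieces decouple, will be the key technical step, and this is where the cabling construction producing $L_\pm$ (which ensures the hypotheses of Lemma~\ref{lem:L}) buys the cleanness of the final formula.
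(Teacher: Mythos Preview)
Your proposal is correct and follows essentially the same approach as the paper: the $0$-surgery mapping-cone formula combined with K\"unneth, the nontorsion $\spc$ structure on the $S^1\times S^2$ factor forcing $H_*(B^+_\spint)=0$, and the hypotheses $b(K\subset \pm Y)=1$ producing rank-one vertical and horizontal differentials that together cut the dimension by $2$. The paper makes explicit the two points you correctly flag as obstacles---the relevant subquotient is precisely $C\{i<0,\ j\geq \bar g-2\}$ (a three-term complex with two copies of $\hfkhat(Z,J_\pm,\bar g)\cong\F$ at $(-1,\bar g-1)$ and $(-2,\bar g-2)$ flanking $\hfkhat(Z,J_\pm,\bar g-1)$), and one must sum over all $\spint=\spinc\#\spinc_0$ with $\spinc\in\spc(Y)$ rather than restrict to the single open-book $\spc$ structure $\mathfrak{s}_\pm$.
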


\begin{proof} 
Let us denote the genus of $J_+$ by \[\bar{g}:=g(J_+) =g(L_+) + g(K) = g'+g.\]
Recall that there is a natural identification \[\spc(Z_0(J_+)) \cong \spc(Z)\times \Z.\] More precisely,  for every $\spc$ structure $\spint$ on $Z$ and each integer $k$, there is a unique $\spc$ structure $\spint_k$ on $Z_0(J_+)$ determined by the conditions that \[\spint_k|_{Z\setminus J_+} = \spint|_{Z\setminus J_+} \textrm{ and } \langle c_1(\spint_k), [S\cup F] \rangle = 2k,\] where we are viewing $S\cup F$ as the result of capping off the boundary connected sum   $S\natural F$, which is the natural fiber surface for the knot $J_+ = K \# L_+$. Moreover, every $\spc$ structure on $Z_0(J_+)$ arises in this way. Recall that  \[\hfp(Z_0(J_+),\mathrm{top}{-}1)\] is the direct sum of the Heegaard Floer  groups of $Z_0(J_+)$ over  $\spc$ structures of the form $\spint_{\bar g-2}$. We will first show that this group is in fact supported in $\spc$ structures $\spint_{\bar g-2}$, where \[\spint\in\spc(Z = Y\# (S^1\times S^2))\] is of the form $\spint = \spinc \# \spinc_+$. We will then prove the dimension formula in the proposition.

Let $(C, d)$ be the reduced model for the $(\Z\oplus \Z)$-filtered knot Floer complex   \[\cfkinfty(Z,J_+),\] as described in \S\ref{ssec:infty}. In particular, for each $\spint \in \spc(Z)$, we have that  \begin{equation}\label{eqn:Chfk}C_\spint(i,j)  \cong \hfkhat(Z,J_+,\spint,j-i),\end{equation} and $d = \sum d_{mn}$, where each component $d_{mn}$ is a sum of maps of the form \[C_\spint(i,j) \to C_\spint(i-m,j-n).\]  This is a complex over $\F[U]$, where multiplication by $U$ is a map \[U:C_\spint(i,j) \to C_\spint(i-1,j-1).\] For each integer $k$, we consider the induced chain complexes \begin{align*}A^+_{k,\spint} &= C_\spint\{\max(i,j-k) \geq 0\},\\
B^+_{\spint}&= C_\spint\{i\geq 0\},
\end{align*}
as in \cite{osz-integer}, where the latter is chain homotopy equivalent to $\cfp(Z,\spint)$. There are two natural chain maps \[v^+_{k}, h^+_k:A^+_{k,\spint}\to B^+_{\spint}\] where $v^+_k$ is  vertical projection onto $C_\spint\{i\geq 0\}$; and $h^+_k$ is horizontal projection onto $C_\spint\{j\geq k\}$, followed by the identification of the latter with $C_\spint\{j\geq 0\}$ induced by multiplication by $U^k$, followed by a chain homotopy equivalence from $C_\spint\{j\geq 0\}$ to $C_\spint\{i\geq 0\}$.

One can compute the  Floer homology of surgeries in terms of this data. For instance, the  Heegaard Floer  complex of $Z_0(J_+)$ in the $\spc$ structure $\spint_k$ is  known to be chain homotopy equivalent to the mapping cone of $v^+_{k}+h^+_k$, \begin{equation}\label{eqn:Zzero2}\cfp(Z_0(J_+),\spint_k) \simeq \cone(v^+_{k}+h^+_k).\end{equation}  For a torsion $\spc$ structure $\spint$, this follows exactly as in \cite[\S4.8]{osz-integer}. For nontorsion $\spint$, this  follows from  \cite[Theorem 3.1]{ni-propg}. We  use this extensively below.

Let us suppose first that  $\spint = \spinc \# \spinc'$ with $\spinc'\neq \spinc_+$. As mentioned above, our aim in this case is to prove  that \begin{equation}\label{eqn:zerotorsion}\hfp(Z_0(J_+),\spint_{\bar g-2}) = 0.\end{equation} This will follow if we can show that \begin{equation}\label{eqn:zerotorsionhat}\hfhat(Z_0(J_+),\spint_{\bar g-2}) = 0,\end{equation} given the  exact triangle \[ \dots \to \hfhat(Z_0(J_+),\spint_{\bar g-2})  \to \hfp(Z_0(J_+),\spint_{\bar g-2})  \xrightarrow{U} \hfp(Z_0(J_+),\spint_{\bar g-2})  \to \dots\] and the fact that every element in the group in \eqref{eqn:zerotorsion} is in the kernel of $U^m$ for some positive integer $m$. 
Let \begin{align*}\widehat{A}_{k,\spint} &= C_\spint\{\max(i,j-k) = 0\},\\
\widehat{B}_{\spint}&= C_\spint\{i= 0\},
\end{align*}
 denote the kernels of $U$ acting on $A^+_{k,\spint}$ and $B^+_{\spint}$, respectively, and let \[\widehat{v}_{k}, \widehat{h}_k:\widehat{A}_{k,\spint}\to \widehat{B}_{\spint}\] be the restrictions of  $v^+_{k}$ and $h^+_k$ to $\widehat{A}_{k,\spint}$. Then we have that \begin{align*}
 \cfhat(Z_0(J_+),\spint_k) &= \ker(U: \cfp(Z_0(J_+),\spint_k)\to \cfp(Z_0(J_+),\spint_k))\\
 &\simeq \ker(U: \cone(v^+_{k}+h^+_k)\to \cone(v^+_{k}+h^+_k))\\
 &=\cone(\widehat{v}_{k}+\widehat{h}_k).
 \end{align*} To prove \eqref{eqn:zerotorsionhat}, it therefore suffices to prove that $\widehat{v}_{\bar g-2}+\widehat{h}_{\bar g-2}$ is an isomorphism.
 
 We first claim that $\widehat{A}_{\bar g-2,\spint}= \widehat{B}_{\spint}$ and hence that the projection $\widehat{v}_{\bar g-2}$ is the identity map. For this, it (more than) suffices  to prove that \begin{equation}\label{eqn:zeroc}C_\spint\{i\leq 0, j= \bar{g}-2\}=C_\spint\{i= 0, j\geq \bar{g}-2\}=0,\end{equation} since in this case we will have by definition that \begin{equation}\label{eqn:Astd}\widehat{A}_{\bar g-2,\spint}= \widehat{B}_{\spint} = C_\spint\{i= 0, j< \bar{g}-2\}.\end{equation} According to \eqref{eqn:Chfk}, each complex in \eqref{eqn:zeroc} is isomorphic as a vector space to a direct sum of knot Floer homology  groups of the form \[\hfkhat(Z,J_+,\spint,k)\] with $k\geq\bar{g}-2$. We claim that these knot Floer homology groups vanish. This follows from  an application of the  K\"unneth formula   \cite[Theorem 7.1]{osz-knot}, which implies  that  \begin{equation}\label{eqn:kunneth} \hfkhat(Z,J_+,\spint,\bar{g}-i) =\bigoplus_{k=0}^i \hfkhat(Y,K,\spinc,g-k)\otimes \hfkhat(S^1\times S^2,L_+,\spinc',g'+k-i)\end{equation} for any integer $i$. The fact that the groups \[\hfkhat(S^1\times S^2,L_+,g')\textrm{ and } \hfkhat(S^1\times S^2,L_+,g'-1)\] are supported in the  $\spc$ structure $\spinc_+\neq \spinc'$, while \[\hfkhat(S^1\times S^2,L_+,g'-2)=0,\] by Lemma \ref{lem:L}, implies that the knot Floer group in \eqref{eqn:kunneth} vanishes for $i=0,1,2$, as claimed.
This proves \eqref{eqn:Astd}, and hence that $\widehat{v}_{\bar g-2}$ is the identity map.
 
Next, since the definition of $h^+_{\bar g-2}$ starts with projection onto $C_\spint\{j\geq \bar g -2\}$, its restriction \[\widehat{h}_{\bar g-2}:\widehat{A}_{\bar g-2,\spint}\to \widehat{B}_{\spint}\] is identically zero, given \eqref{eqn:Astd}. Therefore, \[\widehat{v}_{\bar g-2}+\widehat{h}_{\bar g-2}=\widehat{v}_{\bar g-2}=\id\] is an isomorphism, and hence  $\hfp(Z_0(J_+),\spint_{\bar g-2}) = 0$ for all such $\spint$, as claimed.

Now suppose that  $\spint =\spinc\#\spinc_+$. Since $\spinc_+$ is nontorsion, the evaluation of $c_1(\spint)$ on a sphere factor $\{\pt\}\times S^2$ in the $S^1\times S^2$ summand is nonzero. It follows from the adjunction inequality that
\begin{equation*}\label{eqn:Bzero}H_*(B_\spint^+)\cong \hfp(Z,\spint)=0.\end{equation*} There are two natural  exact triangles coming from  short exact sequences of chain complexes: \[\xymatrix@R=5pt{
\dots\ar[r]^-{}&H_*(A^+_{\bar g-2,\spint})\ar[rr]^-{(v^+_{\bar g-2}+h^+_{\bar g-2})_*} && H_*(B_\spint^+)\ar[r]^-{}&\hfp(Z_0(J_+),\spint_{\bar g-2}) \ar[r]^-{}&\dots\\
\dots\ar[r]^-{}&H_*(A^+_{\bar g-2,\spint})\ar[rr]^-{(v^+_{\bar g-2})_*} && H_*(B_\spint^+)\ar[r]^-{}&H_*(C_\spint\{i<0, j\geq \bar g-2\}) \ar[r]^-{}&\dots}\]
Since $H_*(B_\spint^+)=0$, it follows   that  \begin{equation*}\label{eqn:ZA}\hfp(Z_0(J_+),\spint_{\bar g-2}) \cong  H_*(C_\spint\{i<0, j\geq \bar g-2\})\end{equation*}  for all $\spint\in \spc(Z)$ of the form $\spint = \spinc\#\spinc_+$. 

Note that for any $\spint\in \spc(Z)$,  the complex $C_\spint\{i<0, j\geq \bar g-2\}$ is isomorphic as a vector space to a direct sum of knot Floer  groups of the form \[\hfkhat(Z,J_+,\spint,k)\] with $k\geq \bar{g}-1$. As  above, these groups vanish for  $\spint = \spinc\#\spinc'$ with $\spinc'\neq \spinc_+$. Since we have also shown that \[\hfp(Z_0(J_+),\spint_{\bar g-2})  = 0\] for such $\spint$,  we   conclude that, in fact, \[\hfp(Z_0(J_+),\spint_{\bar g-2}) \cong  H_*(C_\spint\{i<0, j\geq \bar g-2\})\] for \emph{every} $\spint\in \spc(Z)$. Thus, all that remains to prove the formula \begin{equation}\label{eqn:formula}\dim\hfp(Z_0(J_+),\textrm{top}-1) =\dim\hfkhat(Y,K,g-1)-1\end{equation}   in the proposition is to show that \begin{equation}\label{eqn:bigA} \dim H_*(C\{i<0, j\geq \bar g-2\})=\dim \hfkhat(Y,K,g-1)-1. \end{equation} We do so below. 

First note by \eqref{eqn:Chfk}  that the complex $C\{i<0, j\geq \bar g-2\}$ is given by 
 \[\xymatrix@R=13pt{
&C(-1, \bar g-1)\ar[ldd]_{d_{11}} \ar[dd]^-{d_{01}}&&& \F_a\cong \F\ar[ldd]_{d_{11}} \ar[dd]^-{d_{01}}\\
&&\cong&&\\
C(-2,\bar g-2)&\ar[l]^-{d_{10}}C(-1, \bar g-2)&& \F_b\cong \F&\ar[l]^-{d_{10}}\hfkhat(Z,J_+,\bar g-1).} 
\]
The components $d_{01}$ and $d_{10}$ above can be identified with the components of the  $d_1$ differential \begin{align}
\label{eqn:d11}d_1&:\hfkhat(Z,J_+,\bar g) \to \hfkhat(Z,J_+,\bar g-1),\\
\label{eqn:d12}d_1&:\hfkhat(Z,J_+, 1-\bar g) \to \hfkhat(Z,J_+,-\bar g),
\end{align}
respectively, as explained in \S\ref{ssec:infty}. We claim that both components are nontrivial. Indeed, since $b(K\subset Y) = 1$, there is a nontrivial component of the $d_1$ differential \[d_1:\hfkhat(Y,K,g) \to \hfkhat(Y,K,g-1),\] per Remark \ref{rmk:alternative}. The filtered complex associated with the knot $J_+\subset Z$ is filtered chain homotopy equivalent to the tensor product of the filtered complexes associated with $K\subset Y$ and $L_+\subset S^1\times S^2$, by the K\"unneth formula. It follows readily that the differential  in \eqref{eqn:d11} is nontrivial as well. Similarly, we conclude from $b(K\subset -Y)=1$ and Remark \ref{rmk:alternative} that there is a nontrivial  component of the $d_1$ differential \[d_1:\hfkhat(Y,K,1-g) \to \hfkhat(Y,K,-g),\] which shows by the same argument that the differential in  \eqref{eqn:d12} is nontrivial. 

We have thus shown that \[\xymatrix@C=15pt@R=15pt{\\C\{i<0, j\geq \bar g-2\}&\cong&\\}\xymatrix@R=15pt{
&\F_a \ar[dd]^-{d_{01}} \ar[ddl]_-{d_{11}}\\\\
\F_b&\ar[l]^-{d_{10}}\hfkhat(Z,J_+,\bar g-1),}\] where the components $d_{01}$ and $d_{10}$ are injective and surjective, respectively, and $d_{11}$ is either zero or an isomorphism. Letting \[\partial = d_{11} + d_{01} + d_{10},\] we see that the kernel of $\partial$ is the direct sum \[\ker(\partial) = \F_b \oplus \ker(d_{10}),\] while the image of $\partial$ is the direct sum \[\Img(\partial) = \F_b \oplus \Img(d_{01}).\] Since $\ker(d_{10})$ is a codimension-1 subspace of $\hfkhat(Z,J_+,\bar{g}-1)$ and contains $\Img(d_{01})$, we conclude that \[\dim H_*(C\{i<0, j\geq \bar g-2\}) = \dim \hfkhat(Z,J_+,\bar g -1)-2.\] Thus, all that remains for \eqref{eqn:bigA} is to show that \[\dim \hfkhat(Z,J_+,\bar g -1) = \dim \hfkhat(Y,K,g-1)+1.\] By the K\"unneth formula and Lemma \ref{lem:L}, \begin{align*}
\dim \hfkhat(Z,J_+,\bar g -1)&=\dim\hfkhat(Y,K,g-1)\cdot \dim\hfkhat(S^1\times S^2,L_+,g')\\
&\quad+\dim\hfkhat(Y,K,g)\cdot \dim\hfkhat(S^1\times S^2,L_+,g'-1)\\
&=\dim \hfkhat(Y,K,g-1)+1,\end{align*} as desired. This completes the proof of \eqref{eqn:formula} as explained above.
The proof that \[\hfp(Z_0(J_-),\textrm{top}-1) \cong\hfkhat(Y,K,g-1)-1\] proceeds in exactly the same manner.
\end{proof}

\section{Theorem \ref{thm:main} and its corollaries}\label{sec:proof}
In this section, we prove Theorem \ref{thm:main} and its corollaries. Theorem \ref{thm:main} will  follow  from  Proposition \ref{prop:yi} and Theorem \ref{thm:symp} below, as outlined in the introduction. The latter may be viewed as a means by which symplectic Floer homology detects right-veering monodromy. For the statement of the theorem, recall that the maps \[g_\pm:F\to F\] are the monodromies of the fibered knots $L_\pm \subset S^1\times S^2$ introduced in the previous section.

\begin{theorem}
\label{thm:symp}
Let $K\subset Y$ be a fibered knot with monodromy \[h:S\to S\] which is not isotopic to the identity map. Then $h$ is right-veering if and only if  the  maps \[h\cup g_\pm: S\cup F \to S \cup F\] satisfy \[\dim \hfs(h\cup g_+)  =2+ \dim \hfs(h\cup g_-).\]
\end{theorem}

We note that Theorem \ref{thm:main} and its corollaries do not require the \emph{if} direction of Theorem \ref{thm:symp}. We include it here for completeness and because it may be useful for other applications.

\begin{proof}[Proof of Theorem \ref{thm:symp}]
We will apply Theorem \ref{thm:cottonclay} to the homeomorphisms $\varphi_\pm = h\cup g_\pm$ of  the closed surface $\Sigma=S\cup F$. Let $\alpha$ and $\beta_\pm$ be   standard  representatives of $h$ and $g_\pm$, as defined in \S\ref{ssec:fdtc}. Let $\phi_\pm$ be a  standard  representative of $h\cup g_\pm$. 

Note that $\beta_\pm$ are inverses of one another, and thus have the same invariant set $N$.  Since \[c(g_+) = -c(g_-) = 1/(9n+3)\in (0,1),\] $\partial F$ abuts a positive or negative twist region for $\beta_\pm$, respectively. It follows that the components of $F\setminus N$ do not abut in $\Sigma$ the components in the complement of the invariant set for $\alpha$ in $S$, and hence contribute the same  to $\dim \hfs(h\cup g_+)$ as to $\dim \hfs(h\cup g_-)$.

Suppose  that  $h$ is right-veering. We will address the two cases provided by Lemma \ref{lem:rvstd} in turn, beginning with the first: that $\partial S$  abuts a positive twist region for $\alpha$. In this case,  $\phi_+$ has one more fixed annulus than $\phi_-$, as explained in Remark \ref{rmk:twists} and depicted in Figure \ref{fig:gluing}.
Both boundary components of this annulus $A$ abut positive twist regions, so this annulus has no negative boundary components and therefore contributes \[\dim H_*(A;\F) =2\] to the term \[\dim H_*(\Sigma_a,\partial_-\Sigma_a;\F)\] for $\dim\hfs(h\cup g_+)$ in  Theorem \ref{thm:cottonclay}. The remaining contributions to $\dim\hfs(h\cup g_\pm)$ are the same for both, proving the formula in the theorem in this case.

\begin{figure}[ht]
\labellist
\tiny \hair 2pt
\pinlabel $S_0$ at 377 165
\pinlabel $A$ at 654 613

\pinlabel $F$ at 91 -1
\pinlabel $S$ at 327 -1
\pinlabel $S\cup F$ at 707 -1

\pinlabel $\phi_+\sim h\cup g_+$ at 960 465
\pinlabel $\phi_-\sim h\cup g_-$ at 960 145

\endlabellist
\centering
\includegraphics[width=9.3cm]{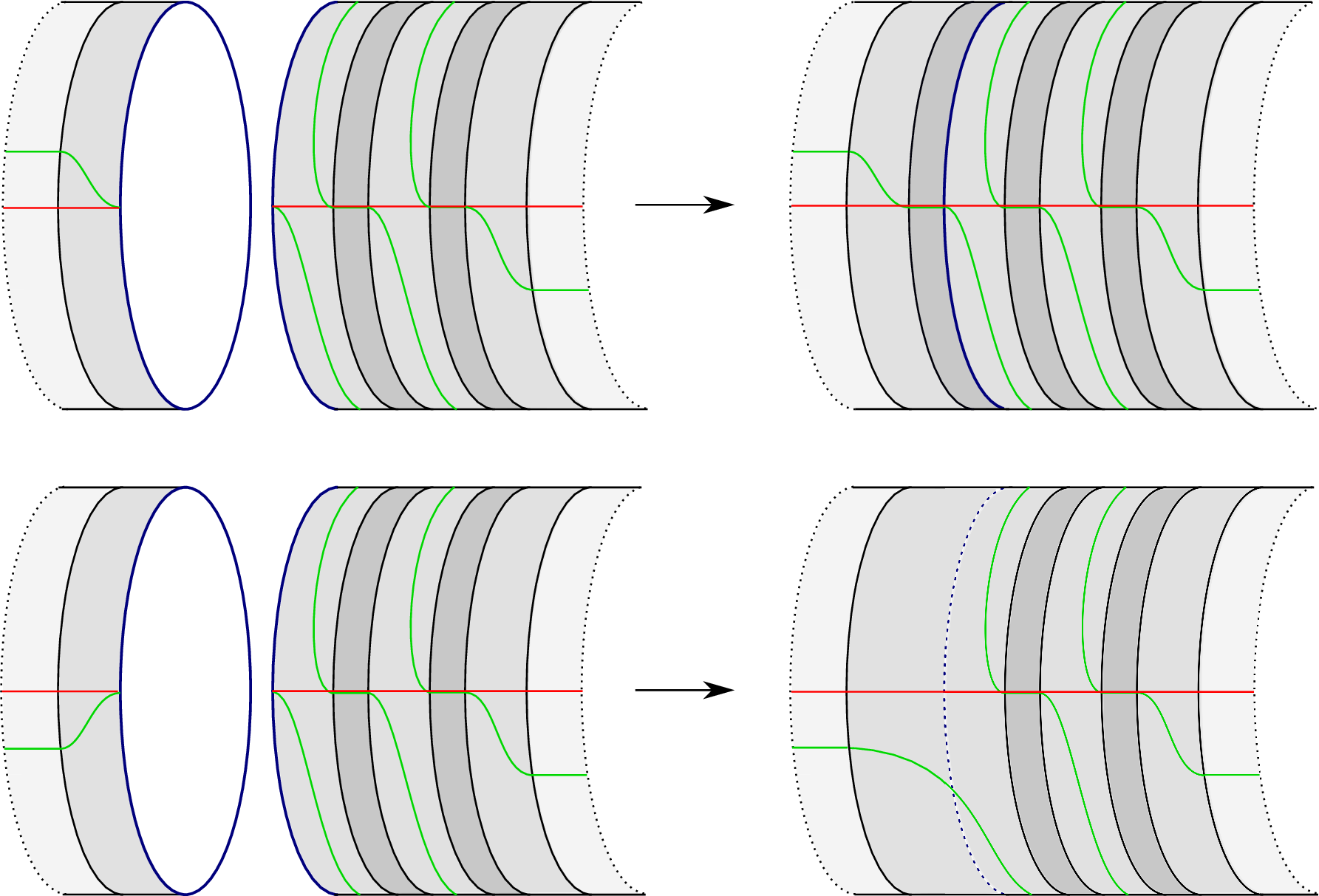}
\caption{The standard representatives $\phi_\pm$ on the top and bottom, respectively. The green arcs are  the images of the red arcs under the corresponding maps. The fixed annuli are shown in dark gray, and the twist regions  in medium gray. Note that $\phi_+$ has one more fixed annulus $A$ than $\phi_-$.}
\label{fig:gluing}
\end{figure}

Suppose next that we are in the second case provided by Lemma \ref{lem:rvstd}:  that $\partial S\subset \partial S_0$, $\alpha_0=\id$, and every boundary  component of $S_0$ besides $\partial S$ abuts a positive twist region for $\alpha$. Since we are assuming for the theorem that $h$ is not isotopic to the identity,  $S_0$ must indeed have boundary components other than $\partial S$. Note that $\partial S$ abuts a positive twist region for $\phi_+$ and a negative twist region for $\phi_-$. Thus, $\partial S_0$ has no negative components for $\phi_+$, but has both positive and negative components for $\phi_-$. It  follows  that the fixed component $S_0$ contributes \[\dim H_*(S_0;\F)\] to  the term \[\dim H_*(\Sigma_a,\partial_-\Sigma_a;\F)\] for $\dim \hfs(h\cup g_+)$ in Theorem \ref{thm:cottonclay} (see Remark~\ref{rem:relative-homology-surface}, with $\partial_-S_0=\emptyset$), but only \[\dim H_*(S_0, \partial S;\F) = \dim H_*(S_0;\F)-2\] to $\dim \hfs(h\cup g_-)$. The remaining contributions to $\dim\hfs(h\cup g_\pm)$ are the same, proving the formula in the theorem in this case as well.

We have so far proven the \emph{only if} direction of the theorem. For the \emph{if} direction (which, as mentioned above, we do not need for our main theorem or its applications), suppose that $h$ is not right-veering. We must show that \[\dim \hfs(h\cup g_+)  \neq 2+ \dim \hfs(h\cup g_-).\] By Lemma \ref{lem:rvstd}, $\partial S$ does not abut a positive twist region for $\alpha$. If $\partial S$ abuts a negative twist region, then the inverse of $h$ is right-veering by Lemma \ref{lem:rvstd}, and we have, by the calculation above and the fact that the dimension of symplectic Floer homology is invariant under taking inverses (see Remark \ref{rmk:sympinv}), that  \[\dim \hfs(h\cup g_+)  = -2+ \dim \hfs(h\cup g_-)\neq 2+ \dim \hfs(h\cup g_-),\] as desired.

If $\partial S$ does not abut a negative twist region, then it does not abut a twist region at all, and we  have that $\partial S \subset \partial S_0$. 
In this case, Lemma \ref{lem:rvstd} says that either $\alpha_0 \neq \id$, or else $\alpha_0 = \id$ and some component of $\partial S_0 \setminus \partial S$ does not abut a positive twist region for $\alpha$. Suppose first that $\alpha_0 \neq \id$. Then, in the notation of Theorem \ref{thm:cottonclay}, $S_0$ belongs to either $\Sigma_1$ (the non-fixed periodic components) or $\Sigma_2$ (the pseudo-Anosov components). In this case, all regions contribute the same amount to both of $\dim \hfs(h\cup g_\pm)$, by Theorem \ref{thm:cottonclay}, and \[\dim \hfs(h\cup g_+)  = \dim \hfs(h\cup g_-) \neq 2+ \dim \hfs(h\cup g_-),\] as desired. Finally, suppose that $\alpha_0 = \id$ and some component $B$ of $\partial S_0 \setminus \partial S$ does not abut a positive twist region for $\alpha$. There are three cases to consider.

 \underline{Case 1: $S_0 \subset \Sigma_a$.} Suppose that  $S_0$ does not abut any pseudo-Anosov components for $\alpha$, so that $S_0$ is a component of $\Sigma_a$ in the notation of Theorem \ref{thm:cottonclay}.  Let \[\alpha'=\alpha|_{S\setminus \textrm{int}(S_0)}.\] We claim that $B$ must abut a negative twist region. Otherwise, $B$ does not abut any twist region, and therefore abuts a periodic component for $\alpha'$. Moreover, $c_B(\alpha') = 0$. This implies by Lemma \ref{lem:periodicid} that $\alpha'$ restricts to the identity  on this component. But that contradicts the minimality of the invariant set for $\alpha$, since $\alpha_0=\id$ as well. Thus, $B$ abuts a negative twist region.  As before, $\partial S$ abuts a positive twist region for $\phi_+$ and a negative twist region for $\phi_-$.  Therefore, $\partial S_0$ has both positive and negative boundary components for $\phi_+$, from which it follows by Remark~\ref{rem:relative-homology-surface} that the fixed component $S_0$ contributes \[\dim H_*(S_0;\F)-2\] to  the term \[\dim H_*(\Sigma_a,\partial_-\Sigma_a;\F)\] for $\dim \hfs(h\cup g_+)$ in Theorem \ref{thm:cottonclay}. Moreover, $S_0$ contributes at least \[\dim H_*(S_0;\F)-2\] to $\dim \hfs(h\cup g_-)$. The remaining contributions to $\dim\hfs(h\cup g_\pm)$ are the same for both, proving that \[\dim \hfs(h\cup g_+)  \leq \dim \hfs(h\cup g_-) < 2+ \dim \hfs(h\cup g_-),\] as desired. 

\underline{Case 2: $S_0 \subset \Sigma_{b,p}$.}
Suppose that $S_0$ abuts exactly one pseudo-Anosov component for $\alpha$, meeting $\partial S_0$ in  $p$ prongs (note that it does not abut a pseudo-Anosov component for  $\beta_\pm$), so that $S_0$ is a component of $\Sigma_{b,p}$ in the notation of Theorem \ref{thm:cottonclay}. Let $\mathring{S_0}$ be the complement of an open disk in $S_0$. Then, by the conventions in \S\ref{ssec:symp}, $\partial_-S_0$ is a nonempty proper subset of $\partial \mathring{S_0}$ for both $\phi_\pm$. It follows  that $S_0$ contributes 
\[\dim H_*(\mathring{S_0};\F)-2\] to the term \[\dim H_*(\Sigma_{b,p}^\circ,\partial_-\Sigma_{b,p};\F)\] in Theorem \ref{thm:cottonclay} for both $\dim \hfs(h\cup g_\pm)$. The remaining contributions to both dimensions are the same, proving that  \[\dim \hfs(h\cup g_+)  = \dim \hfs(h\cup g_-) \neq 2+ \dim \hfs(h\cup g_-),\] as desired. 

\underline{Case 3: $S_0 \subset \Sigma_{c,q}$.}
Suppose that $S_0$ abuts at least two pseudo-Anosov components for $\alpha$, meeting $\partial S_0$ in a total of $q$ prongs, so that $S_0$ is a component of $\Sigma_{c,q}$ in the notation of Theorem \ref{thm:cottonclay}. Then, by the conventions in \S\ref{ssec:symp}, $\partial_-{S_0}$ is a nonempty proper subset of $\partial {S_0}$ for both $\phi_\pm$. It follows  that $S_0$ contributes 
\[\dim H_*({S_0};\F)-2\] to the term \[\dim H_*(\Sigma_{c,q},\partial_-\Sigma_{c,q};\F)\] in  Theorem \ref{thm:cottonclay} for both $\dim \hfs(h\cup g_\pm)$. The remaining contributions to both dimensions are the same, proving that  \[\dim \hfs(h\cup g_+)  = \dim \hfs(h\cup g_-) \neq 2+ \dim \hfs(h\cup g_-),\] as desired.  This completes the proof of the theorem.
\end{proof}

\begin{proof}[Proof of Theorem \ref{thm:main}]
Suppose that $K\subset Y$ is a fibered knot with right-veering monodromy $h:S\to S$. If $h\sim\id$, then $K$ supports the Stein-fillable contact structure on \[Y \cong \#^{2g(S)}(S^1\times S^2),\] which has nontrivial contact invariant. Therefore,  $b(K)>1$, as desired.

Let us now suppose  that $h\not\sim \id$, and let us assume for a contradiction that $b(K\subset Y)=1$. Since $h$ is right-veering and $h\not\sim \id$,  the monodromy $h^{-1}$ of the mirror $K\subset -Y$ is not right-veering. Thus, $b(K\subset -Y)=1$ by Theorem \ref{thm:bvv}. Moreover, $K$ is nontrivial since $h\not\sim\id$.   Proposition \ref{prop:yi} therefore implies that \[\dim\hfp(Z_0(J_+),\mathrm{top}{-}1) = \dim\hfp(Z_0(J_-),\mathrm{top}{-}1),\] where $J_\pm = K\#L_\pm$ and $Z = Y\#(S^1\times S^2)$. Since $Z_0(J_\pm)$ is the mapping torus of $h\cup g_\pm$, and $g(S\cup F)\geq 3$, we have by Theorem \ref{thm:hfsymp} that \[\hfp(Z_0(J_\pm),\textrm{top}{-}1) \cong \hfs(h\cup \phi_\pm).\] Therefore, \[ \dim\hfs(h\cup \phi_+) =\dim \hfs(h\cup \phi_-).\] But this contradicts the conclusion of Theorem \ref{thm:symp}. 
\end{proof}

\begin{proof}[Proof of Corollary \ref{cor:tighthfk}]
This follows immediately from Theorems \ref{thm:hkm} and \ref{thm:main}.
\end{proof}

\begin{proof}[Proof of Corollary \ref{cor:slicefibered}]
As noted in Remark \ref{rmk:tau}, $\tau(K)$ is equal to the Alexander grading of the generator of the $E_\infty\cong \F$ page of the spectral sequence \[E_1\cong \hfkhat(S^3,K)\implies \hfhat(S^3) \cong E_\infty.\] The thinness hypothesis  implies that this spectral sequence collapses at the $E_2$ page, as noted in Remark \ref{rmk:thin}. Thus, every element in the knot Floer homology of $K$ in Alexander grading different from $\tau(K)$ is either 1) a boundary or 2) not a cycle with respect to the $d_1$ differential in the spectral sequence. In particular, since $g:=g(K)\neq \tau(K)$, there
is a nontrivial component of $d_1$ from \[\hfkhat(S^3,K,g)\to \hfkhat(S^3,K,g-1)\] By  Remark \ref{rmk:alternative}, this implies that $b(K\subset S^3) = 1$. The same reasoning applied to the mirror shows that $b(K\subset -S^3)=1$ as well. By Theorem \ref{thm:main}, the monodromies $h^{\pm 1}$  of  $K\subset \pm S^3$ are thus non-right-veering. In particular, $h$ is neither right-veering nor left-veering.
\end{proof}

\begin{proof}[Proof of Corollary \ref{cor:slicefiberedtaut}]
The inequality $|\tau(K)|<g(K)$ means that $K\subset S^3$ is nontrivial, and Corollary \ref{cor:slicefibered} says that the monodromy of $K$ is neither right-veering nor left-veering. Then $K$ is persistently foliar by \cite[Theorem 1.4]{roberts-composite}; we note that the cited theorem is really a slight generalization of \cite[Theorem 4.7 (1)]{roberts}, which is stated using different terminology and only for pseudo-Anosov monodromy. \end{proof}

\begin{proof}[Proof of Corollary \ref{cor:taut}] Suppose $K\subset S^3$ is a fibered alternating knot. First, let us suppose that $K$ is a connected sum of torus knots of the form \[K = T_{2,2n_1+1}\, \#\, \dots \,\#\,T_{2,2n_k+1}.\] 
If $k=1$ and $r$ is a rational number other than $2(2n_1+1),$ then $S^3_r(K)$ is a Seifert manifold with  base  $S^2$. Such manifolds admit co-orientable taut foliations if and only if they are non-L-spaces by \cite[Theorem 1.1]{lisca-stipsicz}. For $k=1$ and $r=2(2n_1+1)$,  $S^3_r(K)$ is a connected sum of lens spaces and therefore an L-space, and it does not admit a taut foliation since it is reducible. If $k>1$, then $K$ does not admit an L-space surgery by \cite[Theorem 1.2]{krcatovich-prime}, and   is persistently foliar by \cite[Theorem 6.1]{roberts-composite}. So, in this case, $S^3_r(K)$ is a non-L-space and admits a co-oriented taut foliation for every $r\in \Q$.

If $K$ is not a connected sum of torus knots, then neither $K$ nor its mirror is strongly quasipositive by \cite[Proposition 3.7]{ni-alternating}. Thus, $K$ does not admit an L-space surgery, and  \[|\tau(K)|<g(K)\] \cite{hedden-positivity}. The latter  implies by Corollary \ref{cor:slicefiberedtaut} that $K$ is persistently foliar. So, in this case, $S^3_r(K)$ is a non-L-space and admits a co-oriented taut foliation for every $r\in \Q$. \end{proof}

\begin{proof}[Proof of Corollary \ref{cor:ExceptSurg}]
Suppose  first that $\tau(K) = g(K)$. From the interpretation of $\tau(K)$ in Remark \ref{rmk:tau}  as the Alexander grading of the  $E_\infty\cong \F$ page of the spectral sequence \[E_1\cong \hfkhat(S^3,K)\implies \hfhat(S^3) \cong E_\infty,\]  we conclude that the generator of $\hfkhat(S^3,K,g)$ must survive in this spectral sequence. In particular, the component of the $d_1$ spectral sequence differential from \[\hfkhat(S^3,K,g)\to \hfkhat(S^3,K,g-1)\] vanishes. Per Remark \ref{rmk:alternative}, this implies that $b(K) >1$, which  implies  by Theorem \ref{thm:main} that the monodromy of $K$ is right-veering.\footnote{For an argument using a bigger hammer, note that $\tau(K)=g(K)$ implies that $K$ is strongly quasipositive and thus supports the tight contact structure on $S^3$, by \cite[Theorem 1.2]{hedden-positivity}. This implies by Theorem \ref{thm:hkm} that the monodromy of $K$ is right-veering.} Then \cite[Theorem 1.1]{ni-exceptSurg} says that $0\le r\le4g(K)$.

Suppose next that $\tau(K) = -g(K)$. The fact that $S^3_r(K)$ is non-hyperbolic implies that \[S^3_{-r}(\mirror{K}) \cong -S^3_r(K)\] is also non-hyperbolic. Since \[\tau(\mirror{K}) = -\tau(K) = g(K)=g(\mirror{K}),\] we have by the previous case that $0\le -r\le4g(K)$, which implies that $-4g(K)\leq r \leq 0$.

Finally, if $|\tau(K)|<g(K)$ and the knot Floer homology of $K$ is thin, then Corollary \ref{cor:slicefibered} tells us that the monodromy of $K$ is neither right-veering nor left-veering. Then \cite[Theorem 1.1]{ni-exceptSurg} says that $|q|\leq 2$.
\end{proof}

\bibliographystyle{alpha}
\bibliography{References}

\end{document}